\documentclass[12pt,a4paper]{amsart}
\usepackage[T1]{fontenc} 
\usepackage[utf8]{inputenc} 
\usepackage{xy} 
\usepackage[dvips]{graphicx}
\vfuzz2pt 
\hfuzz2pt 
\setlength{\oddsidemargin}{0in}
\setlength{\evensidemargin}{0in}
\setlength{\marginparsep}{0in}
\setlength{\marginparwidth}{0in}
\setlength{\hoffset}{0in}
\setlength{\textwidth}{6.3in}
\setlength{\topmargin}{0in}
\setlength{\headheight}{0.3in}
\setlength{\headsep}{0.4in}
\setlength{\voffset}{-0.4in}
\setlength{\textheight}{9.7in}
\newtheorem{thm}{Theorem}[section]

\newtheorem{lem}[thm]{Lemma}
\newtheorem{prop}[thm]{Proposition}
\theoremstyle{definition}
\newtheorem{defn}[thm]{Definition}
\theoremstyle{remark}
\newtheorem{expl}[thm]{Example}
\newtheorem{rem}[thm]{Remark}

\numberwithin{equation}{section}

\newcommand{\brqt}[1]{\left\langle#1\right\rangle}
\newcommand{\prth}[1]{\left(#1\right)}

\newcommand{\set}[1]{\left\{#1\right\}}
\newcommand{\RR}{\mathbb R}

\newcommand{\Rn}{\mathbb R^n}

\newcommand{\Mapsto}{\longmapsto}
\newcommand{\To}{\longrightarrow}
\newcommand{\Iso}{\mathrm{Iso}}
\newcommand{\Gl}{\mathrm{Gl}}
\newcommand{\Sl}{\mathrm{Sl}}
\newcommand{\GG}{\mathcal G} 
\newcommand{\GB}{\mathcal G(B)} 
\newcommand{\OO}{\mathcal O} 
\newcommand{\UU}{\mathcal U} 
\newcommand{\NN}{\mathcal N} 
\newcommand{\FF}{\mathcal F} 
\newcommand{\FM}{\mathcal F M} 
\newcommand{\FB}{\mathcal F B} 

\renewcommand{\and}{\quad\textrm{and}\quad}
\newcommand{\cf}{\emph{cf.}}
\newcommand{\eg}{\emph{e.g.}}

\newcommand{\ie}{\emph{i.e.}}

\newcommand{\secref}[1]{\S\ref{#1}}
\newcommand{\itref}[1]{(\ref{#1})}

\begin{document}
\title{Functionally Graded Media}
\author[C. M. Campos]{C\'edric M. Campos}
\address{Instituto de Ciencias Matem\'aticas\\CSIC-UAM-UC3M-UCM\\Serrano 123\\28006 Madrid\\Madrid
(Spain)} \email{cedricmc@imaff.cfmac.csic.es}
\author[M. Epstein]{Marcelo Epstein}
\address{Department of Mechanical and Manufacturing Engineering\\The
University of Calgary, Calgary, Alberta, Canada T2N 1N4}
\email{mepstein@ucalgary.ca}
\author[M. de Le\'on]{Manuel de Le\'on}
\address{Instituto de Ciencias Matem\'aticas\\CSIC-UAM-UC3M-UCM\\Serrano 123\\28006 Madrid\\Madrid
(Spain)} \email{mdeleon@imaff.cfmac.csic.es}
\begin{abstract}
The notions of uniformity and homogeneity of elastic materials are
reviewed in terms of Lie groupoids and frame bundles. This
framework is also extended to consider the case Functionally
Graded Media, which allows us to obtain some homogeneity
conditions.
\end{abstract}
\maketitle

\section{Introduction} \label{sec.intro}
The mechanical response at a point $X$ of a simple (first-grade)
local elastic body $B$ depends on the first derivative $F$ at
$X\in B$ of the deformation. In other words, $B$ obeys a
constitutive law of the form:
\begin{equation} \label{claw}
W = W(F(X); X)
\end{equation}
where $W$ measures the strain energy per unit volume. The linear
map $F(X)$ is called the deformation gradient at $X$. Of course,
there are materials for which the constitutive equation implies
higher order derivatives or even internal variables as it happens
with the so-called Cosserat media or, more generally, media with
microstructure, but such materials will not be considered here.

An important problem in Continuum Mechanics is to decide if the
body is made of the same material at all its points. To handle
this question in a proper mathematical way, one introduces the
concept of material isomorphism, that is, a linear isomorphism
$P_{XY} : T_XB \To T_YB$ such that
$$ W(FP_{XY}; X) = W(F; Y) $$
for all deformation gradients $F$ at $Y$. Intuitively, this means
that we can extract a small piece of material around $X$ and
implant it into $Y$ without any change in the mechanical response
at $Y$. If such is the case for all pairs of body points, we say
that the body $B$ is uniform. This has been the starting point of
the work by Noll and Wang \cite{Noll,TrueNoll65,TrueWang73,Wang67}
in their approach to uniformity and homogeneity.

In this context, a material symmetry at $X$ is nothing but a
material automorphism of the tangent space $T_XB$. The collection
of all the material symmetries at $X$ forms a group, the material
symmetry group $\GG(X)$ at $X$. An important consequence of the
uniformity property is that the material symmetry groups at two
different points $X$ and $Y$ are conjugate.

A natural question arises: Is there a more general notion that
permits to compare the material responses at two arbitrary points
even if the body does not enjoy uniformity? An answer to this
question is based on the comparison of the symmetry groups at
different points. Indeed, we say that the body $B$ is unisymmetric
if the material symmetry groups at two different points are
conjugate, whether or not the points are materially isomorphic.
From the point of view of applications, this kind of body
corresponds to certain types of the so-called functionally graded
materials (FGM for short). The unisymmetry property was introduced
in \cite{EpsLe00} with the objective to extend the notion of
homogeneity to non-uniform material bodies. Let us recall that the
homogeneity of a uniform body is equivalent to the integrability
of the associated material $G$-structure
\cite{Bloom79,ElEpsSni90}. Roughly speaking, this material
$G$-structure is obtained by attaching to each point of $B$ the
corresponding material symmetry group via the choice of a given
linear reference at a fixed point; a change of the linear
reference gives a conjugate $G$-structure. In a more sophisticated
framework, the set of all material isomorphisms defines a Lie
groupoid, which in some sense is a way to deal with all these
conjugate $G$-structures at the same time.

In the case of unisymmetric materials the attached group is not
the material symmetry group, but its normalizer within the whole
general linear group. This implies a more difficult understanding
of the generalized concept of homogeneity associated with
unisymmetric materials. The main aim of the present paper is to
provide a convenient characterization of this homogeneity
property. In this sense, this work may be regarded as a
continuation and improvement of the results obtained in
\cite{EpsLe00}.

The paper is organized as follows. Section \secref{sec.groupoids}
is devoted to a brief introduction to groupoids and Lie groupoids;
in particular, we define the normalizoid of a subgroupoid within a
groupoid, which is just the generalization of the notion of
normalizer in the context of groups. An important family of
examples is provided by the frame-groupoid, consisting of all the
linear isomorphisms between the tangent spaces at all the points
of a manifold $M$; if $M$ is equipped with a Riemannian metric
$g$, one can introduce the notion of orthonormal groupoid (taking
the orthogonal part of the linear isomorphisms given by the polar
decomposition). If, without necessarily possessing a distinguished
Riemannian metric, $M$ is endowed with a volume form, one obtains
the Lie subgroupoid of unimodular isomorphisms. In Section
\secref{sec.gstructures} we analyze the relations between Lie
groupoids and principal bundles; in particular, we examine the
relation between the frame groupoid and $G$-structures on a
manifold $M$. In Section \secref{sec.constitutive.equation} we
study the concepts of material symmetry and material symmetry
groups, and in Section \secref{sec.uniformity} we discuss
uniformity and homogeneity. Finally, Section
\secref{sec.unisymmetry} is devoted to study the case of FGM
materials, and the geometric characterization of homogeneity in
this case is obtained for both solid and fluids.


\section{Groupoids} \label{sec.groupoids}

Groupoids are a generalization of groups; indeed, they have a
composition law with respect to which there are some identity
elements and every element has an inverse. For a good reference on
groupoids, the reader is refered to Mackenzie \cite{Mack87}.

\begin{defn} \label{def.groupoid}
Given two sets $\Omega$ and $M$, a \emph{groupoid $\Omega$ over
$M$}, the \emph{base}, consists of these two sets together with
two mappings $\alpha, \beta:\Omega\rightarrow M$, called the
\emph{source} and the \emph{target projections}, and a composition
law satisfying the following conditions:
\begin{enumerate}
\item The composition law is defined only for those
$\eta,\xi\in\Omega$ such that $\alpha(\eta)=\beta(\xi)$ and, in
this case, $\alpha(\eta\xi)=\alpha(\xi)$ and
$\beta(\eta\xi)=\beta(\eta)$. We will denote
$\Omega_\Delta\subset\Omega\times\Omega$ the set of such pairs of
elements. \item The composition law is associative, that is
$\zeta(\eta\xi)=(\zeta\eta)\xi$ for those
$\zeta,\eta,\xi\in\Omega$ such that each member of the
previous equality is well defined. \item For each $x\in M$ there
exists an element $1_x\in\Omega$, called the \emph{unity over
$x$}, such that
\begin{enumerate}
\item $\alpha(1_x)=\beta(1_x)=x$; \item $\eta\cdot1_x=\eta$,
whenever $\alpha(\eta)=x$; \item $1_x\cdot\xi=\xi$, whenever
$\beta(\xi)=x$.
\end{enumerate}
\item For each $\xi\in\Omega$ there exists an element
$\xi^{-1}\in\Omega$, called the \emph{inverse of $\xi$}, such that
\begin{enumerate}
\item $\alpha(\xi^{-1})=\beta(\xi)$ and
$\beta(\xi^{-1})=\alpha(\xi)$; \item $\xi^{-1}\xi=1_{\alpha(\xi)}$
and $\xi\xi^{-1}=1_{\beta(\xi)}$.
\end{enumerate}
\end{enumerate}
The groupoid $\Omega$ will be said \emph{transitive} if, for every
pair $x,y\in M$, the set of elements that have $x$ as source and
$y$ as target, \ie\
$\Omega_{x,y}=\alpha^{-1}(x)\cap\beta^{-1}(y)$, is not empty.
\par A subset $\Omega'\subset\Omega$ is said to be a \emph{subgroupoid
of $\Omega$ over $M$} if itself is a groupoid over $M$ with the
composition law of $\Omega$.
\end{defn}

\begin{figure}[h]
\centering
\includegraphics[scale=0.6]{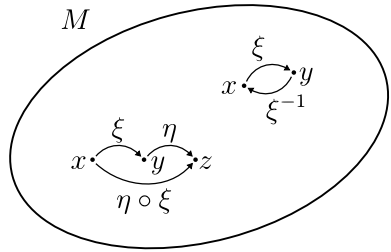}
\caption{The arrow picture.} \label{fig.groupoid}
\end{figure}

The elements of $M$ are often called \emph{objects} and those of
$\Omega$ \emph{arrows} due to their graphical interpretation as we
may see in the Figure \ref{fig.groupoid} or in the example
\ref{eg.trivial.groupoid}. By the very definition of groupoids,
the unity over an object and the inverse of an arrow are unique.
Note also that $\Omega_{x,x}$ is a group and the unity $1_x$ is
the group identity.

\begin{expl}[The trivial groupoid] \label{eg.trivial.groupoid}
Let $M$ denote any non-empty set. The Cartesian product $M\times
M$ is trivially a groupoid over $M$. The source of an arrow
$(x,y)$ is $x$ and the target $y$, and the composition
$(y',z)\cdot(x,y)$ is $(x,z)$ if and only if $y'=y$.
\end{expl}

\begin{expl}[The action groupoid] \label{eg.action.groupoid}
Now, let $G$ be a group acting on the left on $M$. Then the
product $G\times M$ is a groupoid over $M$ with the following
structural maps:
\begin{itemize}
\item the source, $\alpha(g,x)=x$; \item the target,
$\beta(g,x)=g\cdot x$; \item and the composition law,
$(h,y)\cdot(g,y)=(h\cdot g,x)$ if and only if $y=g\cdot x$.
\end{itemize}
With these considerations, the unity over an element $x\in M$ and
the inverse of an arrow $(g,x)\in G\times M$ are respectively
given by $1_x=(e,x)$ and $(g^{-1},g\cdot x)$, where $e\in G$
denotes the identity and $g^{-1}$ the inverse of $g$.
\end{expl}

\begin{prop} \label{th.groupoid.conjugation}
Let $\Omega$ be a groupoid over a set $M$. Then, given three
points $x,y,z\in M$ such that they can be connected by arrows, we
have the relation
\begin{equation} \label{eq.groupoid.transformation}
\Omega_{x,z} = g \cdot \Omega_{x,y} = \Omega_{y,z} \cdot f, \quad
\forall g\in\Omega_{y,z},\ \forall f\in \Omega_{x,y};
\end{equation}
in particular,
\begin{equation} \label{eq.groupoid.conjugation}
\Omega_{y,y} = g \cdot \Omega_{x,x} \cdot g^{-1}, \quad \forall
g\in\Omega_{x,y}.
\end{equation}
\end{prop}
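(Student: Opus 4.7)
The proposition is essentially a bookkeeping statement: it says that left or right multiplication by a fixed arrow induces a bijection between $\Omega_{x,y}$-sets with matching endpoints, and that the isotropy groups $\Omega_{x,x}$ and $\Omega_{y,y}$ are therefore conjugate via any arrow from $x$ to $y$. The plan is to prove \itref{eq.groupoid.transformation} by a direct double-inclusion argument and then deduce \itref{eq.groupoid.conjugation} as a formal consequence.

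First, I would verify the left-multiplication equality $\Omega_{x,z}=g\cdot\Omega_{x,y}$ for $g\in\Omega_{y,z}$. The inclusion $\supseteq$ is a pure matching-of-endpoints check: for $f\in\Omega_{x,y}$ one has $\alpha(g)=y=\beta(f)$, so $gf$ is defined and, by axiom (1) of Definition \ref{def.groupoid}, $\alpha(gf)=\alpha(f)=x$ and $\beta(gf)=\beta(g)=z$, whence $gf\in\Omega_{x,z}$. For the converse $\subseteq$, take $h\in\Omega_{x,z}$ and set $h':=g^{-1}h$. Using axiom (4a) one has $\alpha(g^{-1})=\beta(g)=z=\beta(h)$, so the product is defined, and the endpoint rules together with (4a) give $h'\in\Omega_{x,y}$. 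Associativity and axioms (4b) and (3c) then yield $g\cdot h'=g\cdot(g^{-1}h)=(gg^{-1})h=1_{z}\cdot h=h$, so $h\in g\cdot\Omega_{x,y}$. The right-multiplication equality $\Omega_{x,z}=\Omega_{y,z}\cdot f$ is proved by the symmetric argument, using $f^{-1}$ to recover $\Omega_{y,z}$ from $\Omega_{x,z}$.

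The conjugation formula \itref{eq.groupoid.conjugation} is then a formal corollary. Fix $g\in\Omega_{x,y}$. Applying \itref{eq.groupoid.transformation} with the triple $(x,x,y)$ gives $\Omega_{x,y}=g\cdot\Omega_{x,x}$, and applying it with the triple $(x,y,y)$ gives $\Omega_{x,y}=\Omega_{y,y}\cdot g$. Chaining these identities and multiplying on the right by $g^{-1}$ yields $\Omega_{y,y}=g\cdot\Omega_{x,x}\cdot g^{-1}$, as claimed. (Alternatively one can argue directly that $k\mapsto g k g^{-1}$ maps $\Omega_{x,x}$ bijectively onto $\Omega_{y,y}$ with inverse $h\mapsto g^{-1}hg$, both maps being well defined by the endpoint axioms.)

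There is no real mathematical obstacle in this proof; the only thing that demands care is the consistent tracking of sources and targets so that every composition we write is in the domain $\Omega_\Delta$ of the partial product. To keep the argument transparent I would state, once and for all at the start, that an arrow $\xi\in\Omega_{p,q}$ has $\alpha(\xi)=p$ and $\beta(\xi)=q$, and that under this convention $\Omega_{p,q}\cdot\Omega_{q,r}\subseteq\Omega_{p,r}$; all the identities of the proposition are then immediate applications of this rule together with the existence of inverses and units.
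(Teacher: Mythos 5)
Your argument is correct and complete; the paper itself states Proposition \ref{th.groupoid.conjugation} without any proof, so there is nothing to compare against, and your double-inclusion argument (using $h\mapsto g^{-1}h$ to invert left translation, then specializing to the triples $(x,x,y)$ and $(x,y,y)$) is exactly the standard verification the authors leave to the reader. One small slip in your closing remark: with the paper's composition convention ($\alpha(\eta\xi)=\alpha(\xi)$, $\beta(\eta\xi)=\beta(\eta)$, i.e.\ $\xi$ acts first) the product rule should read $\Omega_{q,r}\cdot\Omega_{p,q}\subseteq\Omega_{p,r}$ rather than $\Omega_{p,q}\cdot\Omega_{q,r}\subseteq\Omega_{p,r}$; the body of your proof uses the correct order throughout, so this does not affect the argument.
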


For the moment, we have only algebraic structures on groupoids.
Let us endow them with differential structures.

\begin{defn} \label{def.groupoid.lie}
We say that a groupoid $\Omega$ over $M$ is a \emph{differential
groupoid} if the groupoid $\Omega$ and the base $M$ are equipped
with respective differential structures such that:
\begin{enumerate}
\item \label{def.groupoid.lie.projection}  the source and the
target projections $\alpha,\beta:\Omega\rightarrow M$ are smooth
surjective submersions; \item \label{def.groupoid.lie.inclusion}
the \emph{unity} or \emph{inclusion map} $i:x\in M\mapsto
1_x\in\Omega$ is smooth; \item
\label{def.groupoid.lie.composition} and the composition law,
defined on $\Omega_\Delta$, is smooth.
\end{enumerate}
Additionally if $\Omega$ is transitive, then we call it a
\emph{Lie groupoid}.
\par A subgroupoid $\Omega'$ of a differential (or Lie) groupoid
$\Omega$ which is in turn a differential groupoid with the
restricted differential structure is called a \emph{differential
subgroupoid} (resp. Lie subgroupoid).
\end{defn}

Note that the condition \itref{def.groupoid.lie.projection} in
Definition \ref{def.groupoid.lie} implies that the
$\alpha\beta$-diagonal $\Omega_\Delta$ is an embeded submanifold
of $\Omega\times\Omega$, and then
\itref{def.groupoid.lie.composition} makes sense. Ver Eecke showed
(\cf\ \cite{Mack87}) that, even with more relaxed conditions, the
inverse map $\xi\in\Omega\mapsto\xi^{-1}\in\Omega$ is smooth, and
therefore a diffeomorphism. In fact, there is a more general way
to define groupoids and subgroupoids (differentiable or not) as
the reader may find in \cite{Mack87}, but for our purposes these
definitions will be sufficient.

\begin{expl}[The frame groupoid] \label{eg.frame.groupoid}
Let $M$ be a smooth manifold with dimension $n$ and consider the
space of linear isomorphisms between tangent spaces to $M$ at any
pair of points, namely
\begin{equation} \label{eq.frame.groupoid}
\Pi(M) = \bigcup_{x,y\in M}\Iso(T_xM,T_yM).
\end{equation}
This set is called the \emph{frame groupoid} of $M$ and, in fact,
it is a Lie groupoid over $M$, as we are going to show.
\par First of all, we must give a manifold structure to $\Pi(M)$. Let
$(U,\phi)$ and $(V,\psi)$ be two charts of $M$ and consider the
map given by
\begin{equation} \label{eq.frame.groupoid.chart}
\begin{array}{rcl}
\chi:W &\To&     \phi(U)\times\Gl(n)\times\psi(V)\\
     A &\Mapsto& (x^i,A_i^j,y^j)
\end{array}
\end{equation}
where $\Gl(n)$ denotes the general linear group on $\Rn$,
\begin{equation} \label{eq.frame.groupoid.coordinates}
W=\bigcup_{x\in U,y\in V}\Iso(T_xM,T_yM) \quad\textrm{and}\quad
A\prth{\frac\partial{\partial x^i}}=A_i^j\frac\partial{\partial
y^j}.
\end{equation}
 By means of the induced chart $(W,\chi)$ we endow $\Pi(M)$ with a
differential structure of dimension $2n+n^2$.
\par The structural maps are given in the following way:
\begin{itemize}
\item the source and the target projections: if
$A\in\Iso(T_xM,T_yM)$, then $\alpha(A)=x$ and $\beta(A)=y$; \item
the composition law is the natural composition between
isomorphisms when it is defined; \item and the inclusion: if $x\in
M$, then the unity $1_x$ over $x$ is the identity map of
$\Gl(T_xM)=\Iso(T_xM,T_xM)$.
\end{itemize}
These maps define clearly a groupoid over $M$ and, through
\eqref{eq.frame.groupoid.chart} and
\eqref{eq.frame.groupoid.coordinates}, they are smooth for the
differential structure naturally induced from the one of $M$.
\end{expl}

\begin{expl}[The unimodular groupoid] \label{eg.unimodular.groupoid}
Let $M$ be an orientable smooth manifold of dimension $n$ and let
$\rho$ be a volume form on it (in a more general case, without the
assumption of orientation, we can consider a volume density). We
can use $\rho$ to define a determinant function over the frame
groupoid $\Pi(M)$ by the formula:
\begin{equation} \label{eq.determinant}
\rho(A\cdot v_1,\dots,A\cdot v_n) =
{\det}_\rho(A)\cdot\rho(v_1,\dots,v_n) \quad \forall A\in\Pi(M),
\end{equation}
where $v_1,\dots,v_n\in T_{\alpha(A)}M$. Now, it is easy to check
that the set of unimodular transformations
\begin{equation} \label{eq.unimodular.groupoid}
\UU(M) = {\det}_\rho^{-1}(\set{-1,+1}),
\end{equation}
which is called the \emph{unimodular groupoid}, is a transitive
subgroupoid of $\Pi(M)$. In fact, it is a Lie subgroupoid of
$\Pi(M)$, since $\det_\rho$ is a smooth submersion and thus
$\UU(M)$ is a closed submanifold.
\end{expl}

\begin{expl}[The orthogonal groupoid] \label{eg.orthogonal.groupoid}
Let $(M,g)$ be a Riemannian manifold of dimension $n$ and consider
the space of orthogonal linear isomorphisms between tangent spaces
to $M$ at any pair of points, namely
\begin{equation} \label{eq.orthogonal.groupoid}
\OO(M) = \bigcup_{x,y\in M}O(T_xM,T_yM).
\end{equation}
This set is called the \emph{orthogonal groupoid} of $M$ and, with
the restriction to it of the structure maps of the frame groupoid
$\Pi(M)$, $\OO(M)$ is a subgroupoid of $\Pi(M)$. Since $\OO(M)$ is
defined by closed and smooth conditions, namely
$$ \OO(M) = \set{A\in\Pi(M)\ :\ A^{-1}=A^T}, $$
this set is a closed submanifold of $\Pi(M)$, and thus a Lie
subgroupoid.
\par Furthermore, the orthogonal groupoid $\OO(M)$ is also a Lie
subgroupoid of the unimodular groupoid $\UU(M)$ related to the
Riemannian density induced by the metric.
\end{expl}

\begin{defn} \label{def.normalizoid}
Let $\Omega$ be a groupoid over $M$; then the \emph{normalizoid}
of a subgroupoid $\tilde\Omega$ of $\Omega$ is the set defined by
\begin{equation} \label{eq.normalizoid}
N(\tilde\Omega) = \set{g\in\Omega_{x,y}\ :\ \tilde\Omega_y =
g\cdot\tilde\Omega_x\cdot g^{-1},\ x,y\in B}.
\end{equation}
\end{defn}

From the definition, it is obvious that a subgroupoid
$\tilde\Omega$ of a groupoid $\Omega$ is also a subgroupoid of its
normalizoid $N(\tilde\Omega)$ which is, in turn, a subgroupoid of
the ambient groupoid $\Omega$.

Note that the group over a base point in the normalizoid is the
normalizer of the group over this point in the subgroupoid, that
is
\begin{equation} \label{eq.normalizer}
(N(\tilde\Omega))_{x,x} = N(\tilde\Omega_{x,x}),
\end{equation}
which explains the used terminology. The difference between a
subgroupoid and its normalizoid can be huge. For instance, given a
transitive groupoid $\Omega$ over a set $M$, consider its base
groupoid, that is the subgroupoid consisting of the groupoid
unities:
\begin{equation} \label{eq.base.groupoid}
1(\Omega) = \set{1_x : x\in M}.
\end{equation}
Then, the normalizoid of $1(\Omega)$ in $\Omega$ is the whole
groupoid $\Omega$. From now on, we will focus on subgroupoids of
the frame groupoid over a manifold and we will see how to reduce
the normalizoid of a subgroupoid whenever an extra structure is
avaible on the base manifold.

First of all, recall that there exists a unique decomposition of a
linear isomorphism into an orthogonal part and a symmetric one.
More precisely, let $F:E\To E'$ be a linear isomorphism between
two inner product vector spaces $E$ and $E'$. There exist an
orthogonal map $R:E\To E'$ and positive definite symmetric maps
$U:E\To E$, $V:E'\To E'$ such that:
\begin{equation} \label{eq.polar.decomposition}
F=R\cdot U \and F=V\cdot R.
\end{equation}
As we have mentioned, each of these decompositions is unique and
they are called the left and right polar decompositions of $F$,
respectively; the orthogonal part $R$ will be denoted by
$F^\perp$.

\begin{prop} \label{th.orthogonal.reduced.groupoid}
Let $\Omega$ be a (transitive) subgroupoid of the frame groupoid
$\Pi(M)$ of a Riemannian manifold $(M,g)$. Denote by $\bar\Omega$
the set of the orthogonal part of elements of $\Omega$, that is
\begin{equation} \label{eq.orthogonal.reduced.groupoid}
\bar\Omega = \set{ F^\perp\ :\ F\in\Omega }.
\end{equation}
Then $\bar\Omega$ is a (transitive) subgroupoid of the orthogonal
groupoid $\OO(M)$. We call $\bar\Omega$ the \emph{orthogonal
reduction of $\Omega$} (or the \emph{reduced groupoid}, for the
sake of simplicity).
\end{prop}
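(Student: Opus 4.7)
The plan is to verify one by one the axioms required for $\bar\Omega$ to be a subgroupoid of $\OO(M)$: inclusion in $\OO(M)$, transitivity, containment of the identities, closure under inversion, and closure under composition. The essential technical input throughout is the uniqueness of the left polar decomposition $F = R\cdot U$, with $R$ orthogonal and $U$ positive definite symmetric.

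The first few axioms fall out cleanly. The inclusion $\bar\Omega\subset\OO(M)$ is built into the definition, since every $F^\perp$ is orthogonal. Transitivity of $\bar\Omega$ is inherited from that of $\Omega$: for any $x,y\in M$, choose $F\in\Omega_{x,y}$ and observe that $F^\perp\in\bar\Omega$ is an arrow from $x$ to $y$. Each identity $1_x$ belongs to $\Omega$ and is already orthogonal, so $1_x=1_x^\perp\in\bar\Omega$. For inverses, write $F=R\cdot U$ with $R=F^\perp$; then
\[
 F^{-1} \;=\; U^{-1}R^{-1} \;=\; R^{-1}\bigl(R\,U^{-1}\,R^{-1}\bigr),
\]
and the bracketed factor, being the conjugate of the positive symmetric operator $U^{-1}$ by an orthogonal map, is itself positive symmetric on $T_yM$. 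Uniqueness of the polar decomposition then yields $(F^{-1})^\perp=R^{-1}=(F^\perp)^{-1}$, so $F\in\Omega \Rightarrow F^{-1}\in\Omega$ gives $(F^\perp)^{-1}\in\bar\Omega$.

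The step I expect to be the main obstacle is closure under composition. For composable $F_1,F_2\in\Omega$, writing $F_i=R_iU_i$ and expanding
\[
 F_2F_1 \;=\; (R_2R_1)\,\bigl(R_1^{-1}U_2R_1\bigr)\,U_1,
\]
the second factor is a product of two positive symmetric operators and is not symmetric in general, so $(F_2F_1)^\perp\neq R_2R_1$ and merely invoking $F_2F_1\in\Omega$ is insufficient. One must instead exhibit some $G\in\Omega$ whose orthogonal part coincides with $R_2R_1=F_2^\perp F_1^\perp$; equivalently, a positive symmetric $V$ on $T_xM$ with $(R_2R_1)\cdot V\in\Omega$. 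I would try to produce such a $V$ by transporting the symmetric pieces $U_1,U_2$ through the orthogonal factors and re-symmetrising, exploiting that the isotropy groups $\Omega_{x,x}$ are closed Lie subgroups of $\Gl(T_xM)$ and so interact well with the Cartan-type decomposition $\Gl=O\cdot\mathrm{Sym}^+$. This is the delicate point where the real work lies, and I suspect closing it cleanly may require a mild additional hypothesis on $\Omega$ (or interpreting $\bar\Omega$ as the subgroupoid it generates rather than the bare set of polar parts).
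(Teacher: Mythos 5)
Your verification of the easy axioms (inclusion in $\OO(M)$, transitivity, unities, and the inverse computation $F^{-1}=R^{-1}\cdot(RU^{-1}R^{-1})$) is correct, and your diagnosis of composition as the real obstacle is not merely right — it pinpoints the flaw in the paper's own proof. The paper's entire argument is the single assertion that $F_3=F_2\cdot F_1$ implies $F_3^\perp=F_2^\perp\cdot F_1^\perp$ ``by the uniqueness of the polar decomposition''. That is exactly the multiplicativity your expansion $F_2F_1=(R_2R_1)\prth{R_1^{-1}U_2R_1}U_1$ refutes: the product of two positive definite symmetric operators need not be symmetric, so uniqueness of the polar decomposition gives nothing. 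A concrete instance: $F_1=\prth{\begin{smallmatrix}2&0\\0&1\end{smallmatrix}}$ and $F_2=\prth{\begin{smallmatrix}2&1\\1&1\end{smallmatrix}}$ are both positive definite symmetric, so $F_1^\perp=F_2^\perp=\Id$, yet $F_2F_1=\prth{\begin{smallmatrix}4&1\\2&1\end{smallmatrix}}$ is not symmetric and hence has a nontrivial orthogonal part.

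Moreover, the repair you were hoping for — exhibiting some $G\in\Omega$ with $G^\perp=R_2R_1$ — cannot be carried out in general, because the proposition is false as stated. Take $M=\RR^2$ with the Euclidean metric and let $\Omega$ be the transitive Lie subgroupoid of $\Pi(\RR^2)$ consisting of the isomorphisms whose matrix in the standard frames is upper triangular with positive diagonal (the groupoid of the $G$-structure $\RR^2\times H$, $H$ the upper triangular group). For $h=\prth{\begin{smallmatrix}a&b\\0&c\end{smallmatrix}}$ with $a,c>0$ the orthogonal polar factor is the rotation by the angle $\theta$ with $\cos\theta=(a+c)/\sqrt{(a+c)^2+b^2}>0$ and $\sin\theta=-b/\sqrt{(a+c)^2+b^2}$; hence $\bar\Omega_{x,x}=\set{R_\theta:\abs\theta<\pi/2}$, which contains rotations arbitrarily close to $\pm\pi/2$ but nothing near $R_\pi$, and so is not closed under composition. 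Your closing hedge is therefore the correct conclusion: an additional hypothesis is genuinely required. In the only place the paper really needs this circle of ideas, Proposition \ref{th.orthogonal.reduced.normaloid}, the extra hypothesis that each $\Omega_{x,x}$ is orthogonal is present, and there the groupoid property of $\bar\NN(\Omega)$ follows from the identity $\bar\NN(\Omega)=\NN(\Omega)\cap\OO(M)$ (an intersection of groupoids), not from Proposition \ref{th.orthogonal.reduced.groupoid}.
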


\begin{proof}
In order to show that $\bar\Omega$ is a subgroupoid of $\OO(M)$,
we only have to check that it is a groupoid over $M$ with the
restriction of the structure maps of $\Pi(M)$, which is clear once
we note that for any three linear isomorphisms $F_1,F_2,F_3$, such
that $F_3=F_2\cdot F_1$, we have by the uniqueness of the polar
decomposition that $F_3^\perp=F_2^\perp\cdot F_1^\perp$.
\end{proof}

Note that the orthogonal reduction of a normalizoid is not
necessarily a subgroupoid of the original one.

\begin{prop} \label{th.orthogonal.reduced.normaloid}
In the hypotesis of Proposition
\ref{th.orthogonal.reduced.groupoid}, if $\Omega$ is such that,
for every base point $x\in M$, $\Omega_{x,x}$ is a subgroup of
$\OO_{x,x}(M)$ (the orthogonal group at $x$), then the orthogonal
reduction of the normalizoid of $\Omega$ coincides with the
intersection of the orthogonal groupoid and the normalizoid
itself, \ie
\begin{equation} \label{eq.orthogonal.reduced.normaloid}
\bar\NN(\Omega) = \NN(\Omega)\cap\OO(M).
\end{equation}
\end{prop}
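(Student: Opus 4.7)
The inclusion $\NN(\Omega)\cap\OO(M)\subseteq\bar\NN(\Omega)$ is immediate: any $g$ in the intersection is orthogonal, so $g^\perp=g$, and therefore $g=g^\perp\in\bar\NN(\Omega)$. The substance of the proposition lies in the reverse inclusion, and that is what I would focus on.

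For the reverse inclusion, take $g\in\NN(\Omega)$ with source $x$ and target $y$, and consider the right polar decomposition $g=V\cdot g^\perp$, where $V:T_yM\to T_yM$ is positive definite symmetric. Since $g^\perp\in\OO(M)$ automatically, the goal reduces to proving $g^\perp\in\NN(\Omega)$, that is
\[
g^\perp\cdot\Omega_{x,x}\cdot(g^\perp)^{-1}=\Omega_{y,y}.
\]
Set $K=g^\perp\cdot\Omega_{x,x}\cdot(g^\perp)^{-1}$. The hypothesis $\Omega_{x,x}\subset\OO_{x,x}(M)$ together with the orthogonality of $g^\perp$ gives $K\subset\OO_{y,y}(M)$, and the normalizoid condition for $g$ rewrites as
\[
V\cdot K\cdot V^{-1}=\Omega_{y,y}\subset\OO_{y,y}(M).
\]

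The crucial algebraic observation is then that a positive definite symmetric $V$ which conjugates one orthogonal subgroup to another must commute with the source subgroup. Explicitly, for every $k\in K$ one has $VkV^{-1}\in\OO_{y,y}(M)$; imposing $(VkV^{-1})^T=(VkV^{-1})^{-1}$ and using $V^T=V$ together with $k^T=k^{-1}$, a short manipulation yields $V^2\,k=k\,V^2$. Hence $V^2$ commutes with every element of $K$, and by the uniqueness of the positive definite symmetric square root --- equivalently, by functional calculus applied to the self-adjoint operator $V^2$ --- $V$ itself commutes with every element of $K$. Consequently $V\cdot K\cdot V^{-1}=K$, and combining this with $V\cdot K\cdot V^{-1}=\Omega_{y,y}$ gives $K=\Omega_{y,y}$, which is the desired identity. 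Thus $g^\perp\in\NN(\Omega)\cap\OO(M)$.

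The only step requiring any care is the passage from ``$V^2$ commutes with $k$'' to ``$V$ commutes with $k$''. This is standard but indispensable: the positive symmetric square root of a positive symmetric operator is a continuous function of that operator, so it lies in the bicommutant of $V^2$, hence commutes with anything that commutes with $V^2$. The rest of the argument is a direct unravelling of the polar decomposition and of the definition of the normalizoid.
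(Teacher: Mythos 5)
Your proof is correct and follows the same strategy as the paper's: the easy inclusion is dispatched by noting that an orthogonal element is its own orthogonal part, and the substance is showing that the orthogonal factor $g^\perp$ of a normalizoid element still conjugates $\Omega_{x,x}$ onto $\Omega_{y,y}$. The only difference is that the paper delegates this key step to a citation (Lemma A.2 of \cite{EpsLe00}), whereas you prove it outright: the computation showing that $V^2$ commutes with the conjugated orthogonal subgroup $K$, followed by the passage to $V$ via the uniqueness (or functional calculus) of the positive definite square root, is exactly the content of that lemma, so your argument is a correct, self-contained replacement for the reference.
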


\begin{proof}
The inclusion $\bar\NN(\Omega)\supset\NN(\Omega)\cap\OO(M)$ is
clear and, from the above Proposition
\ref{th.orthogonal.reduced.groupoid}, we have
$\bar\NN(\Omega)\subset\OO(M)$, thus we only need to show that
$\bar\NN(\Omega)\subset\NN(\Omega)$. Let
$R\in\bar\NN_{x,y}(\Omega)$, then there exist a linear isomorphism
$F\in\NN_{x,y}(\Omega)$ such that $F^\perp=R$. Since $F$
conjugates the orthogonal subgroups $\Omega_{x,x}$ and
$\Omega_{y,y}$, so does its orthogonal part (\cf\ \cite{EpsLe00},
Lemma A.2). Hence, $R\in\NN_{x,y}(\Omega)$ and $\bar\NN(\Omega)
\subset \NN(\Omega)\cap\OO(M)$.
\end{proof}

Similar results can be given whenever $M$ is equipped with a
volume form.

\begin{prop} \label{th.unimodular.reduced.groupoid}
Given a smooth manifold $M$, suppose it is endowed with a volume
form (or density) $\rho$. If $\Omega$ denotes a (transitive)
subgroupoid of the frame groupoid $\Pi(M)$, then the set
\begin{equation} \label{eq.unimodular.reduced.groupoid}
\Omega^1 = \Omega/{\det}_\rho,
\end{equation}
is a (transitive) subgroupoid of the unimodular groupoid $\UU(M)$
associated with $\rho$ and it will be called the \emph{unimodular
reduction} of $\Omega$.
\par Even more, if $\Omega$ is such that, for every base point $x\in
M$, $\Omega_{x,x}$ is a subgroup of $\UU_{x,x}(M)$ (the unimodular
group at $x$), then the unimodular reduction of the normalizoid of
$\Omega$ coincides with the intersection of the unimodular
groupoid and the normalizoid itself, \ie
\begin{equation} \label{eq.unimodular.reduced.normaloid}
\NN^1(\Omega) = \NN(\Omega)\cap\UU(M).
\end{equation}
\end{prop}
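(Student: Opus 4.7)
The plan is to follow the template of Propositions \ref{th.orthogonal.reduced.groupoid} and \ref{th.orthogonal.reduced.normaloid}, with the unimodular normalization $F^1 := \abs{\det_\rho F}^{-1/n} F$, $n = \dim M$, playing the role of the orthogonal part $F^\perp$. By construction $\det_\rho F^1 = \pm 1$, so $F^1$ lands in $\UU(M)$, and the map $F\mapsto F^1$ is smooth on $\Pi(M)$ since $\det_\rho$ is smooth and never vanishes on isomorphisms.

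For the first claim, the decisive identity is $(F_2 F_1)^1 = F_2^1 \cdot F_1^1$, which follows at once from the multiplicativity of $\det_\rho$ along composable pairs; together with the trivial relations $1_x^1 = 1_x$ and $(F^{-1})^1 = (F^1)^{-1}$, it shows that $\Omega^1 = \set{F^1 : F \in \Omega}$ is closed under composition, units and inverses, hence is a subgroupoid of $\UU(M)$ with the inherited structure maps. Transitivity of $\Omega^1$ transfers directly from that of $\Omega$ by taking any representative in $\Omega_{x,y}$ and normalizing it.

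For the second claim, the decisive observation --- which makes the unimodular case notably lighter than its orthogonal counterpart --- is that $F^1 = c F$ for the positive scalar $c = \abs{\det_\rho F}^{-1/n}$. Since scalars commute with every linear map, $F^1 A (F^1)^{-1} = F A F^{-1}$ for every $A$; hence $F^1$ conjugates $\Omega_{x,x}$ onto $\Omega_{y,y}$ iff $F$ does. Both inclusions of \eqref{eq.unimodular.reduced.normaloid} then reduce to one-line verifications: given $F \in \NN_{x,y}(\Omega)$, the scalar trick places $F^1$ in $\NN(\Omega) \cap \UU(M)$; conversely, any $G \in \NN(\Omega) \cap \UU(M)$ is already unimodular, so it coincides with its own reduction $G^1 \in \NN^1(\Omega)$.

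I do not foresee any serious obstacle here. The analog of Lemma~A.2 of \cite{EpsLe00}, which was genuinely needed in the orthogonal setting to guarantee that $F^\perp$ still conjugates the reduced subgroups, becomes essentially trivial in the unimodular setting thanks to the scalar commutativity above; the hypothesis $\Omega_{x,x} \subset \UU_{x,x}(M)$ therefore plays mainly a bookkeeping role, ensuring that the two sides of \eqref{eq.unimodular.reduced.normaloid} live naturally in the same ambient subgroupoid.
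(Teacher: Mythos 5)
Your argument is correct, and it is essentially the argument the paper intends: the paper states this proposition without proof, remarking only that ``similar results can be given'' to Propositions \ref{th.orthogonal.reduced.groupoid} and \ref{th.orthogonal.reduced.normaloid}, and your normalization $F^1=\abs{\det_\rho F}^{-1/n}F$ is the right reading of the sloppy notation $\Omega/\det_\rho$ (a literal division by $\det_\rho F$ would not produce a unimodular map). The multiplicativity $(F_2F_1)^1=F_2^1\cdot F_1^1$ along composable pairs gives the first claim exactly as the uniqueness of the polar decomposition does in Proposition \ref{th.orthogonal.reduced.groupoid}, and your observation that $F^1$ is a positive scalar multiple of $F$, so that conjugation by $F^1$ and by $F$ coincide, is the correct substitute for Lemma A.2 of \cite{EpsLe00}. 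You are also right that this makes the hypothesis $\Omega_{x,x}\subset\UU_{x,x}(M)$ logically superfluous for the set equality \eqref{eq.unimodular.reduced.normaloid}: both inclusions go through for an arbitrary subgroupoid $\Omega$, since any $G\in\NN(\Omega)\cap\UU(M)$ equals its own reduction and any $F^1$ normalizes whatever $F$ normalizes. This is a genuine simplification over the orthogonal case, where the corresponding hypothesis is indispensable.
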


\section{$G$-structures} \label{sec.gstructures}

Lie subgroupoids of the frame groupoid of a manifold are closely
related to another geometric object: $G$-structures, which are a
particular case of fiber bundles. For a comprehensive reference
related to principal fiber bundles and $G$-structures see
\cite{Fuji72,KoNo69A,KoNo69B}. We give here their definition and
some results about the interconnection with groupoids.

\begin{defn} \label{def.principal.bundle}
Given two manifolds $P,M$ and a Lie group $G$, we say that $P$ is
a \emph{principal bundle} over $M$ with \emph{structure group} $G$
if $G$ acts on the right on $P$ and the following conditions are
satisfied:
\begin{enumerate}
\item \label{def.principal.bundle.free.action} the action of $G$
is free, \ie\ the fact that $ua=u$ for some $u\in P$ implies
$a=e$, the identity element of $G$; \item
\label{def.principal.bundle.cocient} $M=P/G$, which implies that
the canonical projection $\pi:P\To M$ is differentiable; \item
\label{def.principal.bundle.triviallity} $P$ is locally trivial,
\ie\ $P$ is locally isomorphic to the product $M\times G$, which
means that for each point $x\in M$ there exists an open
neighborhood $U$ and a diffeomorphism $\Phi:\pi^{-1}(U)\To U\times
G$ such that $\Phi=\pi\times\phi$, where the map
$\phi:\pi^{-1}(U)\To G$ has the property $\phi(ua)=\phi(u)a$ for
all $u\in\pi^{-1}(U)$, $a\in G$.
\end{enumerate}
A principal bundle is commonly denoted by $P(M,G)$, $\pi:P\To M$
or simply by $P$, when there is no ambiguity. The manifold $P$ is
called the \emph{total space}, $M$ the \emph{base space}, $G$ the
\emph{structure group} and $\pi$ the \emph{projection}. The closed
submanifold $\pi^{-1}(x)$, with $x\in M$, is called the
\emph{fiber over $x$} and is denoted $P_x$; if $u\in P$,
$P_{\pi(u)}$ is called the \emph{fiber through $u$} and is denoted
$P_u$. The maps given in \itref{def.principal.bundle.triviallity}
are called \emph{(local) trivializations}.
\par It should be remarked that a similar definition can be given for
left principal bundles using left actions.
\end{defn}

Notice that any fiber $P_x$ is diffeomorphic to the structure
group $G$, but not canonically so. On the other hand, if we fix
$u\in P_x$, then $P_u=uG$. We may visualize a principal fiber
bundle $P(M,G)$ as a copy of the structure Lie group $G$ at each
point of the base manifold $M$ in a diffentiable way as it is
stated by the trivialization property
\itref{def.principal.bundle.triviallity}.

An elementary example of principal bundle is the \emph{frame
bundle} $\FM$ of a manifold $M$. This manifold consists of all the
reference frames at all the point of $M$. The frame bundle $\FM$
is a principal bundle over $M$ with structure group $\Gl(n)$,
where $n$ is the dimension of $M$. As it is obvious, the canonical
projection $\pi$ sends any frame $x\in\FM$ to the base point $x\in
M$ where it lies. The right action of $\Gl(n)$ over $M$ is defined
in the following way:
\begin{equation} \label{eq.group.action}
\begin{array}{rcl}
R:\FM\times\Gl(n) &\To&     \FM\\
            (z,a) &\Mapsto& R_az=z\cdot a=(a_i^jv_j),
\end{array}
\end{equation}
where $(a_i^j)$ is the matrix representation of $a\in\Gl(n)$ in
the canonical basis of $\Rn$ and $(v_i)$ is the ordered basis
given by $z\in\FM$.

\begin{defn} \label{def.principal.reduction}
Let $P(M,G)$ and $Q(M,H)$ be two principal bundles such that $Q$
is an embedded submanifold of $P$ and $H$ is a Lie subgroup of
$G$. We say that $Q(M,H)$ is a \emph{reduction} of the structure
group $G$ of $P$ if the principal bundle structure of $Q(M,H)$
comes from the restriction of the action of $G$ on $P$ to $H$ and
$Q$. In this case, we call $Q$ the \emph{reduced bundle}.
\end{defn}

Consider the following (non rigorous) construction: take a
principal bundle $P(M,G)$, shrink its structure group to a Lie
subgroup $H$ of $G$, fix an element $u\in P$ in each fibre of the
bundle and apply the action of $H$ to each of these chosen
elements; this gives us a subset $Q\subset P$. The obtained set
$Q$ is a reduced bundle when the selection of the $u$'s is made
smoothly and with certain compatibility.

\begin{defn} \label{def.gstructure}
Let $M$ be an $n$-dimensional smooth manifold and $G$ a Lie
subgroup of $\Gl(n)$; then a \emph{$G$-structure} $G(M)$ is a
$G$-reduction of the frame bundle $\FM$.
\end{defn}

Note that there may exist different $G$-structures with the same
structure group. As an example of $G$-structure, consider a
Riemannian manifold $(M,g)$. The set of orthonormal references of
$\FM$ gives us an $O(n)$-structure. In fact, any $O(n)$-structure
on $M$ is equivalent to a Riemannian structure (see
\cite{Fuji72}).

Now let us introduce two results from \cite{LeMar04} that show how
a $G$-structure may arise from a Lie groupoid.

\begin{prop} \label{th.groupoid.principal.bundle}
Let $\Omega$ be a Lie groupoid over a smooth manifold $M$ with
source and target projections $\alpha$ and $\beta$, respectively.
Given any point $x\in M$, we have that:
\begin{enumerate}
\item $\Omega_{x,x}=\alpha^{-1}(x)\cap\beta^{-1}(x)$ is a Lie
group and \item $\Omega_x=\alpha^{-1}(x)$ is a principal
$\Omega_{x,x}$-bundle over $M$ whose canonical projection is the
restriction of $\beta$.
\end{enumerate}
\end{prop}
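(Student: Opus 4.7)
The plan is to handle the two parts in sequence, with the technical heart of both assertions being the fact that the restriction $\beta|_{\Omega_x}:\Omega_x\to M$ is a smooth surjective submersion.

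For Part (1), since $\alpha$ is a smooth submersion, $\Omega_x=\alpha^{-1}(x)$ is automatically a closed embedded submanifold of $\Omega$. I would then write $\Omega_{x,x}=(\beta|_{\Omega_x})^{-1}(x)$; granting the submersion property mentioned above, this realises $\Omega_{x,x}$ as a closed embedded submanifold of $\Omega_x$, and hence of $\Omega$. Its group structure (already noted after Definition \ref{def.groupoid}) is inherited from the smooth groupoid composition and inversion, which restrict to smooth operations on $\Omega_{x,x}$; so $\Omega_{x,x}$ becomes a Lie group with identity $1_x$.

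For Part (2), I would endow $\Omega_x$ with the right action of $\Omega_{x,x}$ given by groupoid multiplication: for $\xi\in\Omega_x$ and $g\in\Omega_{x,x}$ the product $\xi g$ is defined (since $\alpha(\xi)=x=\beta(g)$) and again lies in $\Omega_x$, so the action is well defined and smooth. Freeness follows from groupoid cancellation ($\xi g=\xi\Rightarrow g=\xi^{-1}\xi=1_x$). To identify the orbits with the fibres of $\beta|_{\Omega_x}$, I observe that if $\xi,\eta\in\Omega_x$ share the same target then $g:=\xi^{-1}\eta\in\Omega_{x,x}$ and $\eta=\xi g$; together with transitivity, this yields the bijection $M\cong\Omega_x/\Omega_{x,x}$ with $\beta|_{\Omega_x}$ as canonical projection. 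Local triviality is then produced from any local section $s:U\to\Omega_x$ of $\beta|_{\Omega_x}$ by the formula
$$ \Phi:\beta^{-1}(U)\cap\Omega_x\To U\times\Omega_{x,x},\quad \xi\Mapsto\prth{\beta(\xi),\,s(\beta(\xi))^{-1}\xi}, $$
which is a diffeomorphism intertwining the right actions (its inverse being $(y,g)\mapsto s(y)g$).

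The main obstacle is establishing the submersion property of $\beta|_{\Omega_x}$, on which everything else hinges. The approach I would take is to exploit right-translations: for every $g\in\Omega_{x,y}$ the map $R_g:\Omega_y\to\Omega_x$, $\xi\mapsto\xi g$, is a smooth diffeomorphism (with inverse $R_{g^{-1}}$) satisfying $\beta\circ R_g=\beta|_{\Omega_y}$. Hence submersiveness is invariant along the diffeomorphic $\alpha$-fibres, and the question reduces to checking it at a single convenient point, such as the unit $1_x$. Combining transitivity with the local triviality theorem for transitive Lie groupoids (\cf\ \cite{Mack87}) then supplies the conclusion, after which the remaining verifications reduce to routine manipulations within the groupoid axioms of Definition \ref{def.groupoid}.
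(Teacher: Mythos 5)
The first thing to say is that the paper does not prove this Proposition at all: it is introduced as one of ``two results from \cite{LeMar04}'' and used as a black box, so there is no in-paper argument to compare yours against. Your construction is the standard one and its overall architecture is sound: realising $\Omega_x=\alpha^{-1}(x)$ as an embedded submanifold, the right action of $\Omega_{x,x}$ on $\Omega_x$ by groupoid multiplication, freeness by cancellation, the identification of the $\beta$-fibres with the orbits, and local triviality manufactured from local sections of $\beta|_{\Omega_x}$ are all correct, and you correctly isolate the submersion property of $\beta|_{\Omega_x}$ as the crux on which both parts depend.

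There are, however, two problems at that crux. (i) Your reduction is misstated: for $g\in\Omega_{x,y}$ the translation $R_g:\Omega_y\to\Omega_x$ satisfies $\beta\circ R_g=\beta|_{\Omega_y}$, so submersiveness of $\beta|_{\Omega_x}$ at a point $\eta\in\Omega_{x,z}$ is equivalent (take $y=z$ and $g=\eta$, so that $R_\eta(1_z)=\eta$) to submersiveness of $\beta|_{\Omega_z}$ at the unit $1_z$ over the \emph{target} $z$ --- not to anything happening at the single unit $1_x$. The translations therefore reduce the problem to checking the condition at \emph{every} unit $1_z$, $z\in M$. Sard's theorem together with surjectivity of $\beta|_{\Omega_x}$ (transitivity) gives a regular value, hence submersiveness at the units over a nonempty open set of targets, and promoting this to all of $M$ still requires an openness-and-connectedness argument that you do not supply. (ii) Your closing appeal to ``the local triviality theorem for transitive Lie groupoids'' of \cite{Mack87} is an appeal to a statement that essentially \emph{is} the Proposition: local triviality of a transitive Lie groupoid is equivalent to $\beta|_{\Omega_x}$ being a surjective submersion, equivalently to $\Omega_x(M,\Omega_{x,x})$ being a principal bundle. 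So, read as a self-contained proof, the argument is circular exactly at the point you yourself identify as the main obstacle. Since the paper only cites \cite{LeMar04}, deferring to the literature is defensible --- but then the preceding reduction does no work, and if the proof is meant to stand on its own you must close the gap described in (i).
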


Given a smooth manifold $M$ of dimension $n$, any reference
$z\in\FM$ (at a point $x\in M$) may be seen as the linear mapping
$e_i\in\Rn\mapsto v_i\in T_xM$, where $(e_1,\dots,e_n)$ is the
canonical basis of $\Rn$ and $(v_1,\dots,v_n)$ the basis of $T_xM$
defined by $z$.

\begin{thm} \label{th.groupoid.gstructure}
Suppose that $M$ is a smooth $n$-dimensional manifold and $\Omega$
is a Lie subgroupoid of the frame groupoid $\Pi(M)$. If $\alpha$
and $\beta$ denote the respective source and target projections of
$\Omega$, then we have that for any point $x\in M$ and any frame
reference $z\in\FM$ at $x$:
\begin{enumerate}
\item $G_z=z^{-1}\cdot\Omega_{x,x}\cdot z$ is a Lie subgroup of
$\Gl(n)$ and \item the set $\Omega_z$ of all the linear frames
obtained by translating $z$ by $\Omega_x$, that is
\begin{equation} \label{eq.groupoid.gstructure}
\Omega_z = \set{g_{x,y}\cdot z\ :\ g_{x,y}\in\Omega_x},
\end{equation}
is a $G_z$-structure on $M$.
\end{enumerate}
Once the reference $z$ is fixed, the linear frames that lie in the
$G_z$-structure are called \emph{adapted} or \emph{distinguished}
references.
\end{thm}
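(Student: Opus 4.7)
The plan is to reduce both parts of the theorem to the preceding Proposition \ref{th.groupoid.principal.bundle}, which already endows $\Omega_x$ with the structure of a principal $\Omega_{x,x}$-bundle over $M$. The fixed reference $z:\Rn\To T_xM$ will serve as the bridge: it conjugates the coefficient group $\Omega_{x,x}\subset\Gl(T_xM)$ into a subgroup of $\Gl(n)$, and it transports the principal bundle $\Omega_x$ inside the frame bundle $\FM$.

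For part (1), I would simply observe that $z$ induces a Lie group isomorphism $\mathrm{Ad}_{z^{-1}}:\Gl(T_xM)\To\Gl(n)$, $A\Mapsto z^{-1}\cdot A\cdot z$. Since $\Omega_{x,x}$ is a Lie subgroup of $\Gl(T_xM)$ by Proposition \ref{th.groupoid.principal.bundle}, its image $G_z=z^{-1}\cdot\Omega_{x,x}\cdot z$ is a Lie subgroup of $\Gl(n)$.

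For part (2), the key object is the map
\begin{equation*}
\Phi:\Omega_x\To\FM,\qquad g_{x,y}\Mapsto g_{x,y}\cdot z,
\end{equation*}
whose image is precisely $\Omega_z$. Because $z$ is a fixed linear isomorphism and groupoid composition is smooth, $\Phi$ is a smooth injective immersion, and by the transitivity of $\Omega$ it is a bijection onto $\Omega_z$ fibered over $M$ via $\beta$. The central observation is the equivariance
\begin{equation*}
\Phi(g\cdot h) = g\cdot h\cdot z = (g\cdot z)\cdot(z^{-1}\cdot h\cdot z) = \Phi(g)\cdot\mathrm{Ad}_{z^{-1}}(h),\qquad h\in\Omega_{x,x},
\end{equation*}
which shows that $\Phi$ intertwines the principal right $\Omega_{x,x}$-action on $\Omega_x$ with a right $G_z$-action on $\Omega_z$ (inherited from the standard right $\Gl(n)$-action on $\FM$). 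One then transports through $\Phi$ the properties that already hold on $\Omega_x$: freeness of the action, the identification $M=\Omega_z/G_z$, and local triviality. In particular, any local trivialization of $\Omega_x\To M$ yields, composed with $\Phi$ and $\mathrm{Ad}_{z^{-1}}$, a local trivialization of $\Omega_z\To M$ with fiber $G_z$. This realizes $\Omega_z$ as an embedded principal $G_z$-subbundle of $\FM$, i.e.\ a $G_z$-structure.

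The main technical obstacle is ensuring that $\Phi$ is not only a smooth bijection onto $\Omega_z$ but an \emph{embedding}, so that $\Omega_z$ inherits a genuine submanifold structure inside $\FM$ and the local triviality transports cleanly. This rests on the fact that $\Omega_x=\alpha^{-1}(x)$ is already an embedded submanifold of $\Omega$ (since $\alpha$ is a submersion), combined with the smoothness of composition and the invertibility of $z$; once $\Phi$ is recognized as a diffeomorphism onto its image, the equivariance displayed above takes care of the remaining principal bundle axioms automatically.
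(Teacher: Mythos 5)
The paper does not actually prove this theorem: it is imported verbatim from \cite{LeMar04} (``let us introduce two results from \cite{LeMar04}\dots''), so there is no in-paper argument to compare yours against. On its own merits your proof is correct and is the natural one: you transport the principal $\Omega_{x,x}$-bundle structure on $\Omega_x=\alpha^{-1}(x)$ supplied by Proposition \ref{th.groupoid.principal.bundle} into the frame bundle via $\Phi(g)=g\cdot z$, and the identity $g\cdot h\cdot z=(g\cdot z)\cdot(z^{-1}hz)$ correctly converts the right $\Omega_{x,x}$-action by groupoid composition into the restriction of the standard right $\Gl(n)$-action on $\FM$ to $G_z$, which is exactly what Definitions \ref{def.principal.reduction} and \ref{def.gstructure} require. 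The two points that genuinely need care are both flagged and handled by you: surjectivity of $\beta$ on $\Omega_z$ (this uses that the paper's Lie groupoids are transitive by Definition \ref{def.groupoid.lie}, so the $G_z$-structure really sits over all of $M$), and the fact that $\Phi$ is an embedding rather than merely an injective immersion --- which follows because $A\mapsto A\cdot z$ is a diffeomorphism from $\alpha^{-1}(x)\subset\Pi(M)$ onto $\FM$ and $\Omega_x$ carries the restricted (embedded) differential structure. I would only add explicitly that each fiber $(\Omega_z)_y=\Omega_{x,y}\cdot z$ is a single $G_z$-orbit by Proposition \ref{th.groupoid.conjugation}, which is what makes the quotient identification $M=\Omega_z/G_z$ immediate; otherwise the argument is complete.
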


Even though the frame groupoid (and hence each of its
subgroupoids) acts on the left on the base manifold, the
structural group that arises from a frame subgroupoid acts
naturally on the right on any of the induced $G$-structures:
\begin{equation} \label{eq.groupoid.gstructure.action}
z_y\cdot g_{z_x} = (g_{x,y}\cdot z_x)\cdot(z_x^{-1}\cdot
g_{x,x}\cdot z_x) = g_{x,y}\cdot g_{x,x}\cdot z_x = g'_{x,y}\cdot
z_x = z'_y,
\end{equation}
where $z_x\in\FF_xM$, $z_y\in(\Omega_{z_x})_y$, $g_{z_x}\in
G_{z_x}$, $g_{x,y}\in\Omega_{x,y}$ and so on.

\begin{rem} \label{rmk.groupoid.gstructure.unicity}
It is readily seen from equation \eqref{eq.groupoid.conjugation}
that two $G$-structures that come from the same Lie groupoid are
equal if and only if they have a reference in common,
\begin{equation} \label{eq.groupoid.gstructure.equivalence}
\Omega_{z_1}=\Omega_{z_2} \Leftrightarrow
\Omega_{z_1}\cap\Omega_{z_2}\neq\emptyset.
\end{equation}
Here ``equal'' means that the two $G$-structures are the same as
sets and they have the same structure groups. By the above
statement, given two $G$-structures $\Omega_{z_1}$ and
$\Omega_{z_2}$ induced by a Lie groupoid $\Omega$, we can suppose
without loss of generality that $z_1$ and $z_2$ are linear frames
at the same base point. Thus, it is easy to see that their
respective structure groups $G_{z_1}$ and $G_{z_2}$ are conjugate;
more precisely:
\begin{equation} \label{eq.groupoid.gstructure.conjugation}
G_{z_2} = z_2^{-1}z_1 \cdot G_{z_1} \cdot z_1^{-1}z_2.
\end{equation}
In short, given a Lie subgroupoid $\Omega$ of $\Pi(M)$, the frame
bundle $\FM$ is the disjoint union of $G$-structures related to
$\Omega$ by Theorem \ref{th.groupoid.gstructure}. Moreover, they
have conjugate group structures and one of these $G$-structures
may be transformed to another by mean of any element $g\in\Gl(n)$
that conjugates their structural groups. Hence, modulo these
transformations, a $G$-structure related to a Lie subgroupoid
$\Omega$ of $\Pi(M)$ is unique, which is clear since $\Omega$ is
fixed.
\end{rem}

A natural question is whether Theorem \ref{th.groupoid.gstructure}
has a converse. Given a $G$-structure, it seems reasonable to be
able to choose differentially isomorphisms that transform adapted
references to their counterparts.

\begin{thm} \label{th.gstructure.groupoid}
Let $\omega$ be a $G$-structure over an $n$-dimensional smooth
manifold $M$. Then the set of linear isomorphism that transforms
distinguished frames into distinguished frames, that is the set
\begin{equation} \label{eq.gstructure.groupoid}
\Omega = \set{ A\in\Pi(M)\ :\ Az\in\omega,\ z\in\omega_{\alpha(A)}
},
\end{equation}
where $\Pi(M)$ is the frame groupoid of $M$ and $\alpha$ the
source projection, is a Lie soubgroupoid of $\Pi(M)$. Furthermore,
for any reference frame $z\in\omega$, the $G$-structure associated
to $\Omega$ and given by Theorem \ref{th.groupoid.gstructure}
coincides with $\omega$, \ie
\begin{equation} \label{eq.gstructure.groupoid.equivalence}
\Omega_z = \omega \and G_z=G.
\end{equation}
\end{thm}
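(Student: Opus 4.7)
The plan is to first verify the algebraic groupoid axioms for $\Omega$, then to endow it with a smooth structure by trivializing it locally through sections of the principal bundle $\omega$, and finally to identify, for any $z \in \omega$, the induced $G$-structure of Theorem \ref{th.groupoid.gstructure} with $\omega$ and its abstract structure group with $G$.

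The algebraic part is essentially unwinding definitions. The first thing I would check is that the condition ``$Az \in \omega_{\beta(A)}$'' in \eqref{eq.gstructure.groupoid} is independent of the distinguished frame $z \in \omega_{\alpha(A)}$: any two such frames differ by the right action of an element $g \in G$, and $\omega_{\beta(A)}$ is $G$-invariant, so $A(z\cdot g) = (Az)\cdot g$ lies in $\omega_{\beta(A)}$ exactly when $Az$ does. Once this is clear, the groupoid axioms of Definition \ref{def.groupoid} follow at once: $1_x z = z \in \omega$ places the identities in $\Omega$; if $A \in \Omega_{x,y}$, $B \in \Omega_{y,z}$ and $z_x \in \omega_x$, then $(BA)z_x = B(A z_x) \in \omega_z$; from $A^{-1}(A z_x) = z_x \in \omega_x$ one gets $A^{-1} \in \Omega$; and transitivity is free because $z_y \circ z_x^{-1}$ lies in $\Omega_{x,y}$ for any $z_x \in \omega_x$, $z_y \in \omega_y$.

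The main obstacle will be the smooth structure. My approach is to exploit the local triviality of the principal bundle $\omega$: pick an open cover $\set{U_\lambda}$ of $M$ together with smooth sections $\sigma_\lambda: U_\lambda \To \omega$. For $x \in U_\lambda$ and $y \in U_\mu$, any $A \in \Omega_{x,y}$ can be written uniquely as $A = \sigma_\mu(y) \cdot g \cdot \sigma_\lambda(x)^{-1}$ for some $g \in G$, because $A\sigma_\lambda(x) \in \omega_y = \sigma_\mu(y) \cdot G$. This yields bijections
\begin{equation*}
\Phi_{\lambda\mu}: U_\lambda \times U_\mu \times G \To \Omega \cap \alpha^{-1}(U_\lambda) \cap \beta^{-1}(U_\mu),
\end{equation*}
that are smooth as maps into $\Pi(M)$, and whose transition maps on overlaps are driven by the smooth $G$-valued cocycle of $\omega$ through left and right translations in $G$. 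These charts equip $\Omega$ with the structure of an embedded submanifold of $\Pi(M)$ of dimension $2n+\dim G$, on which $\alpha$, $\beta$, the inclusion and the composition become smooth by restriction; combined with transitivity, this makes $\Omega$ a Lie subgroupoid. (Equivalently, one could identify $\Omega$ with the gauge groupoid $(\omega \times \omega)/G$ through $[z,z'] \mapsto z' \circ z^{-1}$, but I would prefer the direct chart argument.)

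Finally, for \eqref{eq.gstructure.groupoid.equivalence}, fix $z \in \omega$ over $x \in M$. The inclusion $\Omega_z \subset \omega$ is built into the very definition of $\Omega$. Conversely, given any $z' \in \omega_y$, the isomorphism $A := z' \circ z^{-1}$ maps $\omega_x = z \cdot G$ onto $\omega_y = z' \cdot G$, hence $A \in \Omega_{x,y}$ and $z' = A z \in \Omega_z$, so $\omega \subset \Omega_z$. For the structural group, $A \in \Omega_{x,x}$ holds iff $Az \in \omega_x$, i.e.\ $A = (z\cdot g)\cdot z^{-1}$ for some $g \in G$; thus $\Omega_{x,x} = z \cdot G \cdot z^{-1}$ and $G_z = z^{-1}\cdot\Omega_{x,x}\cdot z = G$, completing the proof.
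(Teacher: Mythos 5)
Your proposal is correct and takes essentially the same route as the paper: the smooth structure is obtained from exactly the same charts, identifying $\Omega\cap\alpha^{-1}(U)\cap\beta^{-1}(V)$ with $U\times G\times V$ via local sections of $\omega$. You are in fact more thorough than the paper, which declares the algebraic groupoid axioms ``obvious'' and leaves the verification of $\Omega_z=\omega$ and $G_z=G$ implicit, whereas you check the well-definedness of the defining condition, the axioms, and the final identification explicitly.
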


\begin{proof}
The set defined by equation \eqref{eq.gstructure.groupoid} is
obviously a transitive subgroupoid of $\Pi(M)$. It remains only to
show that it is a differential groupoid with the restriction of
the structural maps. Given two local cross-sections $(U,\sigma)$
and $(V,\tau)$ of $\omega$, consider the set of isomorphisms in
$\Omega$ with source in $U$ and target in $V$, namely
\begin{equation} \label{eq.gstructure.groupoid.chart.domain}
\Omega_{U,V} = \alpha^{-1}(U)\cap\beta^{-1}(V),
\end{equation}
where $\alpha$ and $\beta$ are the restrictions to $\Omega$ of the
source and the target projections of $\Pi(M)$. Given an
isomorphism $A\in\Omega_{U,V}$, let $x=\alpha(A)\in U$ and
$y=\beta(A)\in V$. If we denote the components of the ordered
bases $\sigma(x)$ and $\tau(y)$ by $(\sigma_i(x))$ and
$(\tau_j(y))$ respectively, we have that there exist coefficients
$A_i^j$ such that
\begin{equation} \label{eq.gstructure.groupoid.frame.transformation}
A\sigma_i(x) = A_i^j\tau_j(y).
\end{equation}
Since $\sigma(x)=(\sigma_i(x))$ is a linear frame at $x$ in
$\omega$, $A\sigma(x)=(A_i^j\tau_j(y))$ is a linear frame at $y$
in $\omega$ too. But $\tau(y)=(\tau_j(y))$ is also a linear frame
at $y$ in $\omega$, thus $a=(A_i^j)$ must necessarily be an
element of the structure group $G$. This consideration being made,
we define the coordinate chart $\Phi_{\sigma,\tau}$ by
\begin{equation} \label{eq.gstructure.groupoid.chart.coordinates}
\begin{array}{rcl}
\Phi_{\sigma,\tau}:\Omega_{U,V} &\To&     U\times G\times V\\
                              A &\Mapsto& (x,a,y)
\end{array}.
\end{equation}
Given a covering of $M$ by local sections of $\omega$, say
$\Sigma$, the atlas
\begin{equation} \label{eq.gstructure.groupoid.chart.atlas}
\set{(\Omega_{U,V},\Phi_{\sigma,\tau})\ :\
(U,\sigma),(V,\tau)\in\Sigma}
\end{equation}
defines a smooth structure on $\Omega$, from which it is a
straightforward computation to show that the projections $\alpha$
and $\beta$ and the composition law are smooth.
\end{proof}

\begin{rem} \label{rmk.gstructure.groupoid}
The result we have just proved, toghether with Theorem
\ref{th.groupoid.gstructure}, shows the equivalence between Lie
subgroupoids of $\Pi(M)$ and reductions of the frame bundle $\FM$.
In fact it is still true for principal bundles in general: by
Proposition \ref{th.groupoid.principal.bundle} we are able to
associate some principal bundles to a groupoid and, given a
principal bundle $P(M,G)$, the set of maps $\phi_{x,y}:P_x\To P_y$
such that $\phi_{x,y}(u\cdot g)=\phi_{x,y}(u)\cdot\phi(g)$, for a
suitable group isomorphism $\phi:G\To G$, is a Lie groupoid
related to $P$ by Proposition \ref{th.groupoid.principal.bundle}.
\end{rem}

\begin{defn} \label{def.integrable.gstructure}
A $G$-structure $G(M)$ over a manifold $M$ is said to be
\emph{integrable} if there exists an atlas
$\set{(U_\alpha,\phi_\alpha)}_{\alpha\in A}$ of the base manifold,
such that the induced cross-sections
$\sigma_\alpha(x)=(T_x\phi_\alpha)^{-1}$ take values in $G(M)$.
\end{defn}

By the very definition, if a $G$-structure is integrable, the same
happens to all its conjugate $G$-structures.

\begin{thm} \label{th.integrable.gstructure}
A $G$-structure over a manifold $M$ with dimension $n$ is
integrable if and only if it is locally isomorphic to the standard
$G$-structure of $\Rn$, that is, to $\Rn\times G$.
\end{thm}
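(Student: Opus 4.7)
The plan is to unwind both implications directly from the definition of the standard $G$-structure of $\Rn$ and from the way a chart of $M$ induces a principal-bundle isomorphism of frame bundles. First I would fix notation: via the canonical basis $(e_1,\dots,e_n)$ of $\Rn$, identify $\FF\Rn\cong\Rn\times\Gl(n)$, so that the standard $G$-structure is simply the $G$-reduction $\Rn\times G$ (with identity section $x\mapsto(x,e)$). For any chart $(U,\phi)$ of $M$, the tangent maps $T_x\phi\colon T_xM\To T_{\phi(x)}\Rn\cong\Rn$ assemble into a map
\begin{equation*}
\Phi\colon\FF U\To\phi(U)\times\Gl(n),\qquad \Phi(z)=\bigl(\phi(x),\,T_x\phi\circ z\bigr),
\end{equation*}
where a reference $z\in\FF_xM$ is regarded as a linear isomorphism $\Rn\To T_xM$. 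This $\Phi$ is a $\Gl(n)$-equivariant diffeomorphism onto $\phi(U)\times\Gl(n)$.

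For the ($\Leftarrow$) direction, assume $G(M)$ is locally isomorphic to $\Rn\times G$. Then cover $M$ by charts $(U_\alpha,\phi_\alpha)$ along which $\Phi_\alpha$ restricts to an isomorphism of $G$-structures $G(U_\alpha)\To\phi_\alpha(U_\alpha)\times G$. The identity section $x\mapsto(x,e)$ pulled back through $\Phi_\alpha^{-1}$ is precisely $\sigma_\alpha(x)=(T_x\phi_\alpha)^{-1}$, so this section takes values in $G(M)$ and integrability follows.

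For the ($\Rightarrow$) direction, start with an atlas $\set{(U_\alpha,\phi_\alpha)}$ as in Definition \ref{def.integrable.gstructure}, so that $\sigma_\alpha(x)=(T_x\phi_\alpha)^{-1}\in G(M)$ for every $x\in U_\alpha$. For any $z\in G(M)_x$ with $x\in U_\alpha$, the fiber $G(M)_x$ is a single $G$-orbit, so there is a unique $a\in G$ with $z=\sigma_\alpha(x)\cdot a$. Computing
\begin{equation*}
\Phi_\alpha(z)=\bigl(\phi_\alpha(x),\,T_x\phi_\alpha\circ\sigma_\alpha(x)\circ a\bigr)=\bigl(\phi_\alpha(x),a\bigr),
\end{equation*}
shows that $\Phi_\alpha$ maps $G(U_\alpha)$ bijectively onto $\phi_\alpha(U_\alpha)\times G$, giving the required local isomorphism with the standard $G$-structure.

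There is no real obstacle; the theorem is essentially a translation exercise. The only care needed is in the bookkeeping around the identification of a frame $z$ with a linear isomorphism $\Rn\to T_xM$, and in using the fact that a principal $G$-bundle fiber is a single $G$-torsor so that the ``transition element'' $a$ automatically lands in $G$ rather than in the ambient $\Gl(n)$. Smoothness of $\Phi_\alpha$ and its restriction to the $G$-reductions is automatic from the smoothness of $\phi_\alpha$ and the embedded submanifold structure of $G(M)\subset\FM$.
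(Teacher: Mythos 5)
The paper states Theorem \ref{th.integrable.gstructure} without proof---it is quoted as a standard fact of the theory of $G$-structures (\cf\ \cite{Fuji72})---so there is no in-paper argument to compare against. Your proof is correct and is the standard unwinding of Definition \ref{def.integrable.gstructure} through the chart-induced frame-bundle isomorphism $\Phi_\alpha$. The two points that genuinely carry the weight are exactly the ones you flag: first, ``locally isomorphic'' must be read as an isomorphism of $G$-structures induced by a diffeomorphism of the base (the prolongation $T\phi_\alpha$), not a mere abstract principal-bundle isomorphism, since otherwise the statement fails; second, the identification of a frame $z\in\FF_xM$ with a linear isomorphism $\Rn\To T_xM$ so that the right $\Gl(n)$-action is precomposition, which is the convention the paper adopts just before Theorem \ref{th.groupoid.gstructure} and which makes $T_x\phi_\alpha\circ\sigma_\alpha(x)\circ a=a$ and hence $\Phi_\alpha(z)=(\phi_\alpha(x),a)$ with $a\in G$. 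With those conventions in place both implications go through as you describe.
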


The next result will be useful in the next section.

\begin{lem} \label{th.intersection.groupoid}
Let $M$ be a manifold. If $\Omega$ and $\tilde\Omega$ are two
subgroupoids of the frame groupoid $\Pi(M)$, then their
intersection $\hat\Omega:=\Omega\cap\tilde\Omega$ is again a
subgroupoid of $\Pi(M)$ (and of $\Omega$ and $\tilde\Omega$).
Furthermore, if they are Lie groupoids, then we have the following
relations:
\begin{equation} \label{eq.intersection.groupoid}
\hat\Omega_z=\Omega_z\cap\tilde\Omega_z \quad\textrm{and}\quad
\hat G_z= G_z\cap\tilde G_z,
\end{equation}
where $z\in\FM$ a is fixed frame and $\Omega_z$, $\tilde\Omega_z$,
$\hat\Omega_z$, $G_z$, $\tilde G_z$ and $\hat G_z$ are the
respective $G$-structures and structural groups.
\end{lem}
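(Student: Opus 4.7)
The plan is two-fold: first verify that $\hat\Omega:=\Omega\cap\tilde\Omega$ inherits the groupoid axioms by restriction from $\Pi(M)$, and then unfold the defining formulas of Theorem~\ref{th.groupoid.gstructure} to see that both identities reduce to the observation that conjugation by a frame and left-translation of a frame by an arrow are injective operations, hence commute with set intersection.

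For the subgroupoid claim I would begin by noting that since $\Omega$ and $\tilde\Omega$ are both subgroupoids of $\Pi(M)$ over the \emph{whole} base $M$, each contains every unity $1_x$; hence $1_x\in\hat\Omega$ for all $x\in M$, ensuring both structural projections $\alpha,\beta$ restricted to $\hat\Omega$ are surjective onto $M$. Given $\eta,\xi\in\hat\Omega$ with $\alpha(\eta)=\beta(\xi)$, the composite $\eta\xi$ computed in $\Pi(M)$ lies in both $\Omega$ and $\tilde\Omega$, hence in $\hat\Omega$; the same argument applies to inverses, and associativity is inherited for free. Thus $\hat\Omega$ is a subgroupoid of $\Pi(M)$ (and of each of $\Omega$ and $\tilde\Omega$).

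For the second assertion, fix a frame $z\in\FF_xM$ and recall from Theorem~\ref{th.groupoid.gstructure} that
$$ G_z=z^{-1}\cdot\Omega_{x,x}\cdot z,\qquad \Omega_z=\set{g_{x,y}\cdot z : g_{x,y}\in\Omega_x}, $$
and analogously for $\tilde\Omega$ and $\hat\Omega$. The map $A\mapsto z^{-1}Az$ is a bijection $\Iso(T_xM,T_xM)\to\Gl(n)$, so it commutes with intersection, and this yields $\hat G_z=z^{-1}(\Omega_{x,x}\cap\tilde\Omega_{x,x})z=G_z\cap\tilde G_z$. For the $G$-structures, $\hat\Omega_z\subset\Omega_z\cap\tilde\Omega_z$ is immediate from $\hat\Omega_x\subset\Omega_x\cap\tilde\Omega_x$; conversely, given $w\in\Omega_z\cap\tilde\Omega_z$ over some $y$, write $w=g\cdot z=\tilde g\cdot z$ with $g\in\Omega_{x,y}$ and $\tilde g\in\tilde\Omega_{x,y}$, and the invertibility of $z$ lets us cancel to conclude $g=\tilde g\in\hat\Omega_{x,y}$, whence $w\in\hat\Omega_z$.

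The argument is essentially formal, with no real obstacle; the only point that calls for a mild disclaimer is that the lemma does not assert, and the proof does not need, that $\hat\Omega$ be a \emph{Lie} subgroupoid (in general, an intersection of two embedded submanifolds of $\Pi(M)$ need not itself be embedded), so the symbols $\hat\Omega_z$ and $\hat G_z$ are to be read at the set-theoretic/abstract-group level, which is enough to support the cancellation manipulations above.
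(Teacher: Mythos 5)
Your proof is correct. The paper states this lemma without any proof at all (it is treated as routine), and your argument---closure of the intersection under composition, inverses and unities, followed by unwinding the formulas $G_z=z^{-1}\cdot\Omega_{x,x}\cdot z$ and $\Omega_z=\{g\cdot z\}$ and cancelling the invertible frame $z$---is exactly the verification the authors evidently had in mind. Your closing caveat, that the intersection of two Lie subgroupoids need not be a Lie (or even transitive) subgroupoid so that $\hat\Omega_z$ and $\hat G_z$ must be read set-theoretically, is a genuine point the paper glosses over, and it is to your credit that you flagged it.
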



\section{The Constitutive Equation} \label{sec.constitutive.equation}

In the most general sense (see \cite{MarsHu83}, for instance), a
\emph{body} is a manifold $B$ that can be embedded in a Riemannian
manifold $(S,g)$ with the same dimension, the \emph{ambient
space}. Usually, the body $B$ is a simply connected open set of
$\RR^3$ and the ambient space is $\RR^3$ itself with the standard
metric. Each embedding $K:B\rightarrow S$ is called a
\emph{configuration} and its tangent map $TK:TB\rightarrow TS$ is
called an \emph{infinitesimal configuration}. If we fix a
configuration $K$ (the \emph{reference configuration}) and  we
pick an arbitrary configuration $\tilde K$, then the embedding
compositon $\phi=\tilde K\circ K^{-1}:K(B)\subset S\rightarrow S$
is considered as a body \emph{deformation}  and we call its
tangent map $T_X\phi$ at a point $X$ in $B$ an \emph{infinitesimal
deformation} or the \emph{deformation gradient}, usually denoted
by $F$. Since $(S,g)$ is a Riemannian manifold, we can induce a
Riemannian metric on $B$ by the pull-back of $g$ by a reference
configuration $K$. Since the metric on $B$ depends from a chosen
reference configuration, it is not canonical. However, for solid
materials, we are able to define an ``almost'' unique metric
compatible with the material structure, as we will show in section
\secref{sec.uniform.elastic.solids}.

Usually, points in the body or in the reference configuration
(when they are identified) are denoted by capital letters $X$,
$Y$, $Z$, etc., and by small letters $x$, $y$, $z$, etc., in the
deformed configuration. At the moment we have the picture shown at
Figure \ref{fig.configuration}.

\begin{figure}[h]
\centering
\includegraphics[scale=0.6]{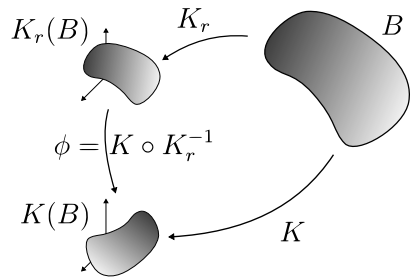}
\caption{Deformation in a reference configuration.}
\label{fig.configuration}
\end{figure}

As stated by \emph{the principle of determinism}, the mechanical
and thermal behaviors of a material or substance are determined by
a relation called \emph{the constitutive equation}. It does not
follow directly from physical laws but it is combined with other
equations that do represent physical laws (the conservation of
mass for instance) to solve some physical problems, like the flow
of a fluid in a pipe, or the response of a crystal to an electric
field. In our case of in\-te\-rest, \emph{elastic materials}, the
constitutive equation establishes that, in a given reference
configuration, \emph{the Cauchy stress tensor} depends only on the
material points and on the infinitesimal deformations applied on
them, that is
\begin{equation} \label{eq.constitutive.equation.elastic}
\sigma = \sigma(F_{K_r},K_r(X)).
\end{equation}
This relation is simplified in the particular case of
\emph{hyper\-elastic materials}, for which equation
\eqref{eq.constitutive.equation.elastic} becomes
\begin{equation} \label{eq.constitutive.equation.hyperelastic}
W = W(F_{K_r},K_r(X)).
\end{equation}
where $W$ is a scalar valued function which measures the stored
energy per unit volume.

Among other postulates (\emph{principle of determinism},
\emph{principle of local action}, \emph{principle of
frame-indifference}, etc.), it is claimed that a constitutive
equation must not depend on the reference configuration. It turns
out that equation \eqref{eq.constitutive.equation.elastic} (and
\eqref{eq.constitutive.equation.hyperelastic}) now can be written
in the form
\begin{equation} \label{eq.constitutive.equation}
\sigma = \sigma(F,X) \quad (W = W(F,X),\ \textrm{respectively)},
\end{equation}
where $F$ stands for the tangent map at $X$ of a local
con\-fi\-gu\-ra\-tion (deformation).

\begin{defn} \label{def.material.symmetry}
A \emph{material symmetry} at a given point $X\in B$ is a linear
isomorphism $P:T_XB\rightarrow T_XB$  such that
\begin{equation} \label{eq.material.symmetry}
\sigma(F\cdot P,X) = \sigma(F,X),
\end{equation}
for any deformation $F$ at $X$. The set of material symmetries at
$X\in B$ is denoted by $\GG(X)$ and it is called the
\emph{symmetry group} of $B$ at $X$. Given a configuration $K$, we
will denote by $\GG_K(X)$ the symmetry group $\GG(X)$ in the
configuration $K$, that is
\begin{equation} \label{eq.symmetry.group}
\GG_K(X) = T_XK\cdot\GG(X)\cdot(T_XK)^{-1}.
\end{equation}
\end{defn}

\begin{figure}[h]
\centering
\includegraphics[scale=0.6]{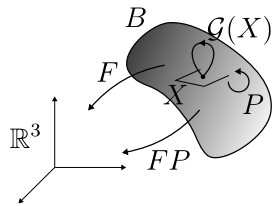}
\caption{Material symmetry.} \label{fig.material.symmetry}
\end{figure}

Different types of elastic materials are given in terms of their
symmetry groups. For instance, a point is solid whenever its
symmetry group in some reference configuration is a subgroup of
the orthogonal group $O(3)$ and, fluid whenever the orthogonal
group is a proper subgroup of the symmetry group. In
\cite{LeMar04,Wang67} it is possible to find a classification, due
to Lie, of the connected Lie subgroups of $\Sl(3)$ and their
corresponding Lie algebras.

\begin{defn} \label{def.elastic.material.point.classification}
Given an elastic material $B$, let $X\in B$ and consider its
symmetry group $\GG(X)$. If there exists a configuration $K$ such
that:
\begin{enumerate}
\item $\GG_K(X)$ is a subgroup of the orthogonal group of
transformations $O(3)$, then $X$ is said to be an \emph{elastic
solid point}. If furthermore
\begin{enumerate}
\item $\GG_K(X)=O(3)$, then we call $X$ a \emph{fully isotropic
elastic solid point}; \item $\GG_K(X)$ is a transverse orthogonal
group (a group of rotations which fix an axis), then $X$ is said
to be a \emph{transversely isotropic elastic solid point}; \item
$\GG_K(X)$ consists only of the identity element, then $X$ will be
a \emph{triclinic elastic solid point};
\end{enumerate}
\item $\GG_K(X)$ is a subgroup of the unimodular group of
transformations $U(3)$ and has the orthogonal group $O(3)$ as a
proper subgroup, then $X$ is said to be an \emph{elastic fluid
point}. If furthermore
\begin{enumerate}
\item $\GG_K(X)=\Sl(3)$ then we still call $X$ an \emph{elastic
fluid}; and \item $\GG_K(X)$ is a transverse unimodular group (a
group of unimodular transformations which fix an axis or a group
of unimodular transformations which fix a plane) then we call $X$
an \emph{elastic fluid crystal}.
\end{enumerate}
\end{enumerate}
The infinitesimal configuration $T_XK$ or the induced frame
$z=(T_XK)^{-1}$ is called an \emph{undistorted state} of $X$.
\end{defn}

This material classification is pointwise. A body is solid if
every point is solid.

\section{Uniformity and Homogeneity} \label{sec.uniformity}

To define the uniformity of a material, we first have to give a
criterion that  establishes when two points are made of the same
material. To compare their symmetry groups is not sufficient since
this is only a qualitative aspect. Indeed, consider two points in
a rubber band, one point may be relaxed while another point may be
under stress. But we are still able to release the stress on the
second point and bring it to the same state as the first one, and
then compare their responses.

\begin{defn} \label{def.material.isomorphism}
We say that two points $X,Y\in B$ are \emph{materially
isomorphic}, if there exists a linear isomorphism
$P_{XY}:T_XB\rightarrow T_YB$ such that
\begin{equation} \label{eq.material.isomorphism}
\sigma(F\cdot P_{XY},X) = \sigma(F,Y),
\end{equation}
for any deformation $F$ at $Y$. The linear map $P_{XY}$ is called
a \emph{material isomorphism}.
\end{defn}

\begin{figure}[h]
\centering
\includegraphics[scale=0.6]{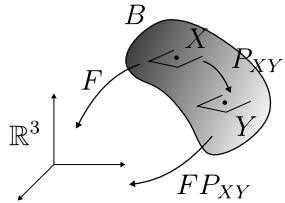}
\caption{Material isomorphism.} \label{fig.material.isomorphism}
\end{figure}

Even if the definition of material isomorphism and material
symmetries are mathematically similar, there is an important
conceptual difference. While the symmetry group of a point
characterizes the material behavior of that point, a material
isomorphism establishes a relation between two different points.
In fact, as already pointed out, a material symmetry can be viewed
as a material automorphism by identifying $X$ with $Y$ in the
above definition.

\begin{defn} \label{def.material.groupoid}
Given a material body $B$, the \emph{material groupoid} is the set
of all the material isomorphisms and symmetries, that is the set
\begin{equation} \label{eq.material.groupoid}
\GB = \set{ P\in\Pi(B)\textrm{ satisfying Definition
\ref{def.material.isomorphism}} }.
\end{equation}
\end{defn}

It is easy to check that the material groupoid $\GB$ is actually a
groupoid. Furthermore, it is a subgroupoid of the frame groupoid
$\Pi(B)$, but note that it is not necessarily a Lie groupoid or
even transitive as the frame groupoid. In fact, when all the
points of a body are pairwise related by a material isomorphism,
it means that the body consists only of one type of material. In
this case, it is materially uniform.

\begin{defn} \label{def.uniformity}
Given a material body $B$, we say that it is \emph{uniform} if the
material groupoid $\GB$ is transitive, and \emph{smoothly uniform}
when the material groupoid is a transitive differential groupoid
(and hence a Lie subgroupoid of $\Pi(B)$).
\end{defn}

A simple but important property of uniform materials is that the
groups of material symmetries are mutually conjugate by any
material isomorphism between the respective base points. To be
more precise, equation \eqref{eq.groupoid.conjugation} reads in
terms of elastic bodies:
\begin{equation} \label{eq.symmetry.group.conjugation}
\GG(Y) = P \cdot \GG(X) \cdot P^{-1}, \quad \forall
P\in\GG(B)_{X,Y},
\end{equation}
for any pair of materially isomorphic points $X,Y\in B$.

When we look a material through different configurations, there
are prefered states of the material we want to distinguish: \eg\
transversely isotropic solids have a fixed axis ``invariant''
under material isomorphisms that we prefer to align with the
vertical axis. Such a state may be modelized in an infinitesimal
configuration by a linear frame $z$. As we have just said, in the
material paradigm, this frame of reference $z$ has some behaviors
that will be mainted by material isomorphisms. If we consider the
set of all these distinguished references that arise from material
transformations of the `reference crystal' (see Figure
\ref{fig.reference.crystal}), then we obtain the so called
material $G$-structure of $B$. As far as we know, Wang was the
first to realize that the uniformity of a material can be modelled
by a $G$-structure \cite{Wang67}, although this fact was
emphasized by Bloom \cite{Bloom79}. For definiteness,

\begin{defn} \label{def.material.gstructure}
A \emph{material $G$-structure} of a smoothly uniform body $B$ is
any of the $G_z$-structures induced by the material groupoid
$\GG(B)$ as shown in Theorem \ref{th.groupoid.gstructure}. The
chosen frame of reference $z\in\FB$ is called the \emph{reference
crystal}.
\end{defn}

\begin{figure}[h]
\centering
\includegraphics[scale=0.6]{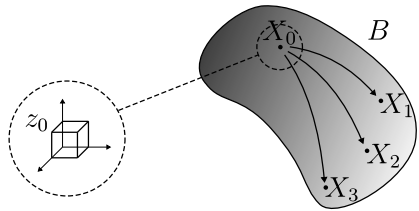}
\caption{The reference crystal.} \label{fig.reference.crystal}
\end{figure}

\begin{defn} \label{def.homogeneity}
Given a smoothly uniform body $B$, a configuration $K$ that
induces a cross-section of a material $G$-structure will be called
\emph{uniform}. If there exists an atlas
$\set{(U_\alpha,K_\alpha)}_{\alpha\in A}$ of $B$ of local uniform
configurations for a fixed material $G$-structure, the body $B$
will be said \emph{locally homogeneous}, and \emph{(globally)
homogeneous} if the body $B$ may be covered by just one uniform
configuration.
\end{defn}

The material concept of homogeneity corresponds to the
mathematical concept of integrability. By Theorem
\ref{th.integrable.gstructure}, a smoothly uniform body $B$ will
be locally homogenous if and only if one (and therefore any) of
the associated material $G$-structures is integrable. Let $K$ a
uniform configuration for a particular integrable $G$-structure
$G(B)$ of a homogeneous elastic material $B$. If $(X,v_1,v_2,v_3)$
denotes the cross section induced by $K$, thus the constitutive
equation \eqref{eq.constitutive.equation.elastic} may be written
in the form
\begin{equation} \label{eq.homogeneous.constitutive.equation}
\sigma = \sigma(F_K,K(X)) = \sigma(F^i_j,x^i),
\end{equation}
with obvious notation. Now note that, since through $K$ any
material isomorphism $P$ may be considered as an element of the
structure group $G$, which is clear for material symmetries, and
since the body $B$ is uniform, we have that
\begin{equation} \label{eq.homogeneous.constitutive.equation.proof}
\sigma(F^i_j,y^i) = \sigma(F_K,K(Y)) = \sigma(F_K\cdot P_K,K(X)) =
\sigma(F^i_k\cdot P^k_j,x^i) = \sigma(F^i_j,x^i).
\end{equation}
Thus, we have just proved the following result:

\begin{thm} \label{th.homogeneous.constitutive.equation.coordinates}
If $K$ is a uniform configuration of a homogeneous elastic body
$B$, the constitutive equation
\eqref{eq.constitutive.equation.elastic} is independent of the
material point and invariant under the right action of the structure
group $G$ of the $G$-structure $G(B)$ related to $K$. Thus,
\begin{equation} \label{eq.homogeneous.constitutive.equation.coordinates}
\sigma = \sigma(F^i_j) \quad\textrm{and}\quad \sigma(F^i_k\cdot
P^k_j) = \sigma(F^i_j)\ \textrm{for any}\ P\in G.
\end{equation}
\end{thm}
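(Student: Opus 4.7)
The strategy is to use the uniform configuration $K$ to convert the intrinsic, manifold-level content of uniformity and material symmetry into the coordinate identities of the theorem. Let $\sigma_K(X):=(T_XK)^{-1}$ denote the frame at $X$ induced by $K$; because $K$ is uniform for the material $G$-structure $G(B)=\Omega_z$ associated with $K$, this frame lies in $\Omega_z$ for every point $X$ in the domain of $K$. Two facts follow immediately from Theorem \ref{th.groupoid.gstructure}: first, given $X,Y$ in that domain, there is a material isomorphism $P_{X,Y}\in\GG(B)_{X,Y}$ with $P_{X,Y}\cdot\sigma_K(X)=\sigma_K(Y)$; second, since $G_z=\sigma_K(X)^{-1}\cdot\GG(X)\cdot\sigma_K(X)$, the $K$-coordinate symmetry group $\GG_K(X)$ coincides with $G$ at every point $X$ of the domain.

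\medskip

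\textbf{Point independence.} Since $P_{X,Y}$ sends the $K$-basis at $X$ to the $K$-basis at $Y$, its $K$-coordinate matrix is the identity $\delta^k_j$. Substituting this into the defining relation $\sigma(F\cdot P_{X,Y},X)=\sigma(F,Y)$ of a material isomorphism and reading off $K$-components yields $\sigma(F^i_j,y^i)=\sigma(F^i_j,x^i)$, so $\sigma$ depends only on $F^i_j$ and not on the material point; this is exactly the chain of equalities displayed in \eqref{eq.homogeneous.constitutive.equation.proof}.

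\medskip

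\textbf{$G$-invariance.} Given $P\in G$ and any material point $X$, the equality $\GG_K(X)=G$ exhibits $P$ as the $K$-coordinate matrix of a genuine material symmetry $Q:=\sigma_K(X)\cdot P\cdot\sigma_K(X)^{-1}\in\GG(X)$. The symmetry relation $\sigma(F\cdot Q,X)=\sigma(F,X)$ then reads in $K$-coordinates as $\sigma(F^i_k\cdot P^k_j,x^i)=\sigma(F^i_j,x^i)$, which, combined with the first step, is the second half of the statement.

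\medskip

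\textbf{Expected difficulty.} No single manipulation is genuinely hard; the whole argument hinges on a careful dictionary between arrows of the material groupoid and their matrix representations in a chart, namely that material isomorphisms between $K$-induced frames become the identity matrix while material symmetries become exactly the elements of $G$. Once this translation is installed, both claims reduce to direct substitution into the defining properties of a material isomorphism and a material symmetry, respectively.
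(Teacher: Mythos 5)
Your proof is correct and follows essentially the same route as the paper: write the constitutive law in the frames induced by the uniform configuration $K$ and use the fact that, since these frames are adapted, material symmetries and material isomorphisms are represented in $K$-coordinates by matrices lying in $G$. The only (harmless) refinement is that you establish point-independence by selecting the particular material isomorphism carrying $\sigma_K(X)$ to $\sigma_K(Y)$, whose matrix is the identity, whereas the paper's chain \eqref{eq.homogeneous.constitutive.equation.proof} takes an arbitrary material isomorphism and then appeals to the $G$-invariance just established.
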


The physical interpretation of this theorem is that points of a
homogenous elastic body $B$ can be put by means of a configuration
$K$ in such a manner they are all at the same state, at least
locally. This configuration $K$ is uniform.

Even if the material $G$-structures of a smoothly uniform body $B$
are different (but equal via conjugation), there must be at least
one of them in which the structure group $G$ satisfies a condition
of the material classification
\ref{def.elastic.material.point.classification}.

\begin{defn} \label{def.elastic.material.uniform.classification}
Accordingly to Definition
\ref{def.elastic.material.point.classification}, a smoothly
uniform elastic body $B$ is \emph{solid} or \emph{fluid}, if all
the points are \emph{solid} or \emph{fluid}, respectively. Any of
the material $G$-structures for which the structure group fulfills
the classification is called \emph{undistorted}.
\end{defn}

\subsection{Uniform Elastic Solids} \label{sec.uniform.elastic.solids}

The following result is due to Wang (\cf\ \cite{Wang67}). In his
paper, Wang defines the material $G$-structures from the point of
view of atlases, families of cross-sections of the frame bundle,
instead of our approach through groupoids. These families are the
cross-sections of the resulting $G$-structures. When a material is
solid, it is possible to endow the body with a metric wich is
compatible with the material structure. Wang calls such a metric
an intrinsic metric.

\begin{thm} \label{th.uniform.elastic.solid.metric}
Let $B$ be a uniform elastic solid material; each undistorted
material $G$-structure $G(M)$ defines a Riemannian metric $g$,
invariant under material symmetries and isomorphisms.
\end{thm}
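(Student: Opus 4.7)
The plan is to exploit the defining feature of an \emph{undistorted} material $G$-structure: by Definition \ref{def.elastic.material.uniform.classification}, the structure group $G$ of the chosen material $G$-structure $G(B)$ sits inside the orthogonal group $O(n)$. This lets us declare each adapted frame to be orthonormal and obtain a well-defined pointwise inner product. Concretely, for each $X\in B$ pick a distinguished frame $z_X\in G(B)_X$, viewed as a linear isomorphism $z_X:\Rn\To T_XB$, and set
\[
g_X(v,w)=\brqt{z_X^{-1}(v),z_X^{-1}(w)},
\]
where $\brqt{\cdot,\cdot}$ denotes the standard inner product on $\Rn$; by construction $z_X$ is orthonormal. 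If $z'_X\in G(B)_X$ is another adapted frame, the principal bundle structure gives $z'_X=z_X\cdot a$ for some $a\in G\subset O(n)$, and since $a$ preserves $\brqt{\cdot,\cdot}$ the inner product is unchanged. Hence $g_X$ depends only on the fiber $G(B)_X$ and not on the choice of adapted frame used to define it.

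For smoothness, the principal bundle $G(B)\To B$ admits local smooth cross-sections $\sigma:U\To G(B)$; in the moving frame $(\sigma_1,\ldots,\sigma_n)$ the metric reads $g(\sigma_i,\sigma_j)=\delta_{ij}$, so its components are constant and therefore smooth, and $g$ is a genuine Riemannian metric on $B$. For invariance, let $P\in\GB_{X,Y}$ be any material isomorphism (the case $X=Y$ gives symmetries). By Theorem \ref{th.groupoid.gstructure}, $G(B)$ is obtained from a reference crystal $z_0$ by left multiplication by elements of $\GB$; thus if $z_X\in G(B)_X$ one has $z_X=P_0\cdot z_0$ with $P_0\in\GB_{\alpha(z_0),X}$, whence $P\cdot z_X=(P\cdot P_0)\cdot z_0\in G(B)_Y$. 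Consequently $P$ sends the $g_X$-orthonormal basis $z_X$ to the $g_Y$-orthonormal basis $P\cdot z_X$, so $P:(T_XB,g_X)\To(T_YB,g_Y)$ is an isometry.

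The main obstacle is essentially conceptual rather than computational: one must recognise that the undistorted hypothesis supplies precisely the inclusion $G\subset O(n)$ needed for well-definedness of the pointwise inner product, and that material isomorphisms act on $G(B)$ by sending adapted frames to adapted frames (which is exactly how $G(B)$ was built from $\GB$ in Section \ref{sec.gstructures}). Everything else reduces to the principal $G$-bundle formalism and the elementary fact that an orthogonal change of orthonormal basis preserves the inner product.
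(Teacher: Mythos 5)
Your proposal is correct and follows essentially the same route as the paper: declare the adapted frames of the undistorted $G$-structure orthonormal, use $G\subset O(n)$ for well-definedness, local cross-sections for smoothness, and the fact that material isomorphisms carry adapted frames to adapted frames for invariance. The paper merely phrases the well-definedness and invariance checks in terms of overlapping cross-sections $(U,\sigma)$, $(V,\tau)$ with $Q=\sigma(X)^{-1}\tau(X)\in G$ and $P=\tau(Y)\sigma(X)^{-1}$, which is the same argument in section-level rather than pointwise language.
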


\begin{proof}
Given a cross-section $(U,\sigma)$ of a fixed undistorted material
$G$-structure  $G(B)$, let $X\in U$ and define
\begin{equation} \label{eq.uniform.elastic.solid.metric}
g_X^\sigma(v,w) := \brqt{\sigma(X)^{-1}\cdot v,\sigma(X)^{-1}\cdot
w},\quad \forall X\in U,\forall v,w\in T_XB,
\end{equation}
where $\brqt{\,,}$ is the Euclidean scalar product. Thus,
$g^\sigma$ is clearly a smoooth positive definite symmetric
bilinear tensor field on $U$, since it is nothing more than the
pullback of the Euclidean metric. Let us check that, in this
manner, the metric $g^\sigma$ does not depend on the chosen
cross-section $(U,\sigma)$. Given any other cross-section
$(V,\tau)$, let $X\in B$ be in the intersection of their domains
(if not empty, of course), then
\begin{equation} \label{eq.uniform.elastic.solid.metric.compatibility}
\begin{array}{rcl}
g_X^\sigma(v,w) &=& \brqt{\sigma(X)^{-1}\cdot v,\sigma(X)^{-1}\cdot w}\\
                &=& \brqt{Q\cdot\tau(X)^{-1}\cdot v,Q\cdot\tau(X)^{-1}\cdot w}\\
                &=& \brqt{\tau(X)^{-1}\cdot v,\tau(X)^{-1}\cdot w}\\
                &=& g_X^\tau(v,w),
\end{array}
\end{equation}
where we used the fact that, by hypothesis,
$Q=\sigma(X)^{-1}\cdot\tau(X)\in G$ is orthogonal.
\par Now, let $P\in\GG_{X,Y}(B)$ be a material isomorphism; there will
exist cross-sections $(U,\sigma),(V,\tau)$ such that
$P=\tau(Y)\cdot\sigma(X)^{-1}$. Then, we have
\begin{equation} \label{eq.uniform.elastic.solid.metric.invariance}
\begin{array}{rcl}
g_Y(P\cdot v,P\cdot w) &=& \brqt{\tau(Y)^{-1}\cdot P\cdot
v,\tau(Y)^{-1}\cdot P\cdot w}\\
                       &=& \brqt{\sigma(X)^{-1}\cdot v,\sigma(X)^{-1}\cdot w}\\
                       &=& g_Y(v,w).
\end{array}
\end{equation}
The metric we where looking for is just the metric $g$ defined in
\eqref{eq.uniform.elastic.solid.metric}.
\end{proof}

If we consider the orthogonal groupoid $\OO(B)$ related to this
metric, we have that the material groupoid is included in it,
$\GG(B)\subset\OO(B)$. Reciprocally, if $B$ is a smoothly uniform
material such that it can be endowed with a Riemannian metric for
which the material symmetries and isomorphisms are orthogonal
transformations, $\GG(B)\subset\OO(B)$, then $B$ must be an
elastic solid. Thus, elastic solids are completely characterized
by Riemannian metrics with the property of being invariant under
material symmetries and isomorphisms.

\begin{rem} \label{rmk.classical.undistorted.relations}
Given two material $G$-structures, $G_1(B)$ and $G_2(B)$, of a
uniform elastic solid $B$, we know that they must be related by
the right action of a linear isomorphism $F\in\Gl(3)$, that is
$G_2(B)=G_1(B)\cdot F$. Thus, if $G_1(B)$ is undistorted, the
$G$-structure $G_2(B)$ will be undistorted if and only if the
symmetric part $V$ of the left polar decomposition of $F$,
$F=V\cdot R$, lies in the centralizer of $G_1$, that is $V\in
C(G_1)$ (\cf\ \cite{Wang67}, proposition 11.3). But this does not
imply that $G_1(B)$ and $G_2(B)$ define the same metric, which is
true only if $V=I$.
\end{rem}

\subsection{Uniform Elastic Fluids} \label{sec.uniform.elastic.fluids}

There are similar results for fluids as for solids. In this case,
the fluid structure induces volume forms.

\begin{prop} \label{th.uniform.elastic.fluid.volume.form}
Let $B$ be a uniform fluid material, then each undistorted
material $G$-structure $G(B)$ defines a volume form $\rho$
invariant under material symmetries and isomorphisms.
\end{prop}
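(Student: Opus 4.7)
The plan is to mimic the proof of Theorem \ref{th.uniform.elastic.solid.metric}, replacing the Euclidean scalar product on $\RR^3$ with the standard volume form on $\RR^3$. The key structural input is the same: a fluid has, by Definition \ref{def.elastic.material.uniform.classification} together with Definition \ref{def.elastic.material.point.classification}, an undistorted material $G$-structure whose structure group $G$ is a subgroup of the unimodular group $U(3)$, exactly as in the solid case $G$ is a subgroup of $O(3)$.

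Concretely, given a local cross-section $(U,\sigma)$ of a fixed undistorted material $G$-structure $G(B)$, I would define at each $X\in U$ and for $v_1,v_2,v_3\in T_XB$
\begin{equation*}
\rho_X^\sigma(v_1,v_2,v_3) := \det\bigl(\sigma(X)^{-1}\cdot v_1,\ \sigma(X)^{-1}\cdot v_2,\ \sigma(X)^{-1}\cdot v_3\bigr),
\end{equation*}
i.e.\ the pullback by $\sigma(X)^{-1}$ of the standard volume form on $\RR^3$. Smoothness is immediate from smoothness of $\sigma$, and non-degeneracy follows since $\sigma(X)$ is an isomorphism. Next I would check cross-section independence: if $(V,\tau)$ is another local cross-section with $X\in U\cap V$, then $Q=\sigma(X)^{-1}\cdot\tau(X)$ lies in $G\subset U(3)$, so $\det Q=\pm 1$ and the pullback formulae for $\sigma$ and $\tau$ agree (up to sign; in the oriented setting one restricts to the identity component, and otherwise one works with a volume density as in Example \ref{eg.unimodular.groupoid}). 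This yields a globally defined volume form (or density) $\rho$ on $B$.

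Finally I would verify invariance under material isomorphisms. Let $P\in\GG_{X,Y}(B)$; there exist cross-sections $(U,\sigma),(V,\tau)$ with $P=\tau(Y)\cdot\sigma(X)^{-1}$, so $\tau(Y)^{-1}\cdot P = \sigma(X)^{-1}$, and hence
\begin{equation*}
\rho_Y(Pv_1,Pv_2,Pv_3) = \det\bigl(\tau(Y)^{-1}Pv_1,\tau(Y)^{-1}Pv_2,\tau(Y)^{-1}Pv_3\bigr) = \det\bigl(\sigma(X)^{-1}v_1,\sigma(X)^{-1}v_2,\sigma(X)^{-1}v_3\bigr) = \rho_X(v_1,v_2,v_3),
\end{equation*}
which is the invariance claimed.

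The only genuinely non-routine point is the bookkeeping of the sign $\det Q=\pm 1$: unlike the orthogonal case, where $Q\in O(3)$ preserves the scalar product on the nose, unimodular $Q$ preserves $\det$ only up to a sign, and so the object one builds is, in full generality, a volume density rather than a volume form. Either one invokes orientability of $B$ (and the fact that the normalization in the statement already refers to ``a volume form''), or one uses the density formulation, consistent with the convention adopted in Example \ref{eg.unimodular.groupoid}.
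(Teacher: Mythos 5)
Your proof is correct and essentially identical to the paper's: the paper defines $\rho_\sigma=\sigma^{*1}\wedge\sigma^{*2}\wedge\sigma^{*3}$ via the co-frame, which is exactly your pullback of the standard determinant by $\sigma(X)^{-1}$, and both the cross-section independence argument (via $\sigma^{-1}\tau\in U(3)$) and the invariance computation (via $\tau(Y)^{-1}\cdot P=\sigma(X)^{-1}$) coincide with the paper's. Your caveat about the sign $\det Q=\pm1$ is a point the paper's own computation silently glosses over (its transition determinant is likewise only $\pm1$), so the orientation/density remark makes your version, if anything, slightly more careful than the original.
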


\begin{proof}
Given a cross-section $(U,\sigma)$ of a fixed undistorted material
$G$-structure $G(B)$, let us define on $U$ the volume form
\begin{equation} \label{eq.uniform.elastic.fluid.volume.form}
\rho_\sigma = \sigma^{*1}\wedge\sigma^{*2}\wedge\sigma^{*3},
\end{equation}
where $\sigma^*$ denotes the co-frame cross-section of $\sigma$,
that is $\sigma^*:U\To\FF^*B$ such that
$\sigma^{*i}(\sigma_j)\equiv\delta^i_j$ on $U$. Let us show that
the volume form $\rho_\sigma$ does not depend on the chosen
cross-section $(U,\sigma)$. In fact, let $(U,\sigma),(V,\tau)$ be
two cross-sections with non-empty domain intersection, then for
any $n$ vectors $v_1,\dots,v_n\in T_XB$, with $X\in U\cap V$, we
have
\begin{eqnarray*}
\rho_\sigma(v_1,\dots,v_n) &=& \det(v_i^j)\\
                           &=&
\det((\sigma^{-1}\tau)_i^k)\cdot\det(\tilde v_k^j)\\
                           &=& \rho_\tau(v_1,\dots,v_n),
\end{eqnarray*}
where we have used $v_i=v_i^j\sigma_i=\tilde v_i^j\tau_i$,
$v_i^j=(\sigma^{-1}\tau)_i^k\cdot\tilde v_k^j$ and
$\sigma^{-1}\tau\in U(n)$. Since the tangent vectors
$v_1,\dots,v_n$ are arbitrary, $\rho_\sigma$ and $\rho_\tau$
coincide on the intersection of their domains, $U\cap V$. Thus,
the volume form given in
\eqref{eq.uniform.elastic.fluid.volume.form} defines locally a
volume form $\rho$ on the whole material body $B$.
\par Let us see how $\rho$ is invariant under material symmetries and
isomorphisms. Given $P\in\GG_{X,Y}(B)$, there must exist
cross-sections $(U,\sigma),(V,\tau)$ such that
$P=\tau(Y)\cdot\sigma(X)^{-1}$. Then, we have
\begin{equation} \label{eq.uniform.elastic.fluid.volume.form.invariance}
\rho\circ P =
(P^{-1}\tau)^{*1}\wedge(P^{-1}\tau)^{*2}\wedge(P^{-1}\tau)^{*3} =
\sigma^{*1}\wedge\sigma^{*2}\wedge\sigma^{*3} = \rho,
\end{equation}
which finishes the proof.
\end{proof}

Considering now the induced unimodular groupoid $\UU(B)$, by the
invariance we have the inclusion $\GG(B)\subset\UU(B)$ which also
characterizes elastic fluids.

\section{Unisymmetry and Homosymmetry} \label{sec.unisymmetry}

As we have seen, the concept of homogeneity must be understood
within the framework of uniformity. But, there are materials that
are not uniform by their very definition, the so called
\emph{functionally graded materials}, or FGM for short. This type
of material can be made by techniques that accomplish a gradual
variation of material properties from point to point: for
instance, ceramic-metal composites, used in aeronautics, consist
of a plate made of ceramic on one side that continuously change to
some metal at the opposite face. The material properties are also
given through a constitutive equation like
\eqref{eq.constitutive.equation}. Therefore, we will have a notion
of material symmetry and the symmetry groups will be non-empty as
in the case of uniform materials. For a FGM material, the symmetry
groups at two different points are still conjugate, accordingly to
the following definition.

\begin{defn} \label{def.unisymmetric.isomorphism}
Given a functionally graded material $B$, let be $X,Y\in B$; we
say that a linear map $A:T_XB\To T_YB$ is a \emph{unisymmetric
(material) isomorphism} if it conjugates the symmetry groups of
$X$ and $Y$, namely,
\begin{equation} \label{eq.symmetry.group.conjugation.2}
\GG(Y) = A\cdot\GG(X)\cdot A^{-1}.
\end{equation}
\end{defn}

As for uniform bodies, the material properties of a FGM are now
characterized by the collection of all the possible unisymmetric
isomorphisms.

\begin{defn} \label{def.material.normalizoid}
Given a functionally graded material $B$, the set of unisymmetric
isomorphisms, that is the set
\begin{equation} \label{eq.material.normalizoid}
\NN(B) = \set{A\in\Pi(B)\ :\ \GG(Y) = A\cdot\GG(X)\cdot A^{-1} },
\end{equation}
will be called the \emph{FGM material groupoid} of $B$.
\end{defn}

\begin{figure}[h]
\centering
\includegraphics[scale=0.6]{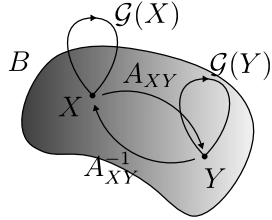}
\caption{The FGM material groupoid.}
\label{fig.material.normalizoid}
\end{figure}

We may now extend the ideas of section \secref{sec.uniformity}
using this new object. Then we obtain:

\begin{defn} \label{def.material.unisymmetry}
A functionally graded material $B$ will be said
\emph{unisymmetric} if the FGM material groupoid $\NN(B)$ is
transitive and, \emph{smoothly unisymmetric} if it is a Lie
groupoid.
\end{defn}

Note that the notion of unisymmetry covers a qualitative aspect in
the sense that a unisymmetric FGM is made of only one ``type'' of
material. For instance, it will be a fully isotropic solid
everywhere or a fluid everywhere, but it cannot be a fully
iscotropic solid at some point and a fluid at another point.

For this groupoid, we also have the associated $G$-structures.

\begin{defn} \label{def.material.nstructure}
Let $B$ be a smoothly unisymmetric body. Any of the asociated
$G$-strutures $\NN_z(B)$, with $z\in\FB$, will be called a
\emph{material $N$-structure}. A cross-section of a material
$N$-structure will be a \emph{unisymmetric cross-section} and a
configuration inducing such a cross-section will be a
\emph{unisymmetric configuration}. If for any of the material
$N$-structures there exists a covering by unisymmetric
configurations, the body $B$ will be said \emph{locally
homosymmetric}, and \emph{(globally) homosymmetric} if the
covering consists of only one unisymmetric configuration.
\end{defn}

As we may see, the homosymmetry property is equivalent to the
integrability of any of the material $N$-structures. However,
there is not an analogue result to Theorem
\ref{th.homogeneous.constitutive.equation.coordinates} for
homosymmetric bodies. Since, even if we have an $N$-structure and
the group structure is the same for any point through any
unisymmetric configuration, the symmetry groups may be represented
by different subgroups of $N$ at each point.

\subsection{Functionally Graded Elastic Solids}
\label{sec.functionally.graded.elastic.solids}

\begin{defn} \label{def.elastic.solid.classification}
We will say that a functionally graded elastic material $B$ is a
\emph{functionally graded solid} if there is a Riemmanian metric
on $B$ invariant under material symmetries, that is every point is
solid. Furthermore, $B$ will be said
\begin{enumerate}
\item \emph{fully isotropic} if every point is fully isotropic;
\item \emph{transversely isotropic} if every point is transversely
isotropic; and \item \emph{triclinic} if every point is triclinic.
\end{enumerate}
The compatible metric is called a \emph{material metric}.
\end{defn}

We have not used the term ``intrinsic'' for the material metric,
since it does not arise from the material structure as for uniform
elastic solids (\cf\ Theorem
\ref{th.uniform.elastic.solid.metric}). The material metric is an
extra structures that ensures that the solid points are glued in a
solid way.

If $B$ is a FGM solid and we consider the orthonormal
cross-sections $(U,\sigma)$ of the $O(3)$-structure given by a
solid metric, then they must verify:
\begin{eqnarray}
&\sigma(X)^{-1}\cdot\GG(X)\cdot\sigma(X)\subseteq O(3) \quad
\forall
X\in U \quad \forall(U,\sigma),&\\
&\sigma(X)^{-1}\cdot\tau(X)\in O(3) \quad \forall X\in U\cap V
\quad \forall(U,\sigma),(V,\tau);&
\end{eqnarray}
where $\GG(X)$ is the material symmetry group of $B$ at $X$. In
fact, these two conditions are necessary and sufficient to define
a solid metric compatible with the material structure by means of
a family of cross-sections of $\FB$.

On the other hand, if we consider another $O(3)$-structure, giving
a second solid metric, the two structures are not a priori related
by the right action of a linear isomorphism $F\in\Gl(3)$. But if
they are, then the symmetric part of the polar decomposition of
$F$ must be spherical, a homothety. This can be interpreted as the
material being in both cases in the same state but the measures of
stress, or strain, are performed with different scales.

\begin{defn} \label{def.relaxable.solid}
A solid FGM $B$ will be said to be \emph{relaxable} if the
$O(3)$-structure given by some solid metric is integrable or,
equivalently, if the Riemannian curvature (with respect to this
metric) vanishes identically. We then say that the
$O(3)$-structure is \emph{relaxed}.
\end{defn}

\begin{defn}
We say that a body $B$ is \emph{homosymmetrically relaxable} if
$B$ is an unisymmetric solid material for which there exists a
covering $\Sigma$ of local configuration that are both,
unisymmetric and relaxed configurations.
\end{defn}

Let $B$ be a homosymmetrically relaxable elastic solid, then we
have these two structures, the unisymmetric and the orthogonal,
which are in certain manner interconnected. As $B$ is a solid,
intuitively we may perceive that only the orthogonal part of a
unisymmetric isomorphism must be important. In what follows, we
will explain this fact in more detail.

A direct consequence of the previous Lemma
\ref{th.orthogonal.reduced.normaloid} and Proposition
\ref{th.intersection.groupoid} is the following theorem, which
implies a result proved by Epstein and de Le\'on \cite{EpsLe00}.

\begin{thm} \label{th.deleon.epstein}
If $B$ is relaxable elastic solid that is also homosymmetric, we
have
\begin{equation} \label{eq.orthogonal.reduced.normalizoid}
\bar\NN(B) = \NN(B)\cap\OO(B),
\end{equation}
where $\bar\NN(B)$ consits in the orthogonal part of the
isomorphisms of $\NN(B)$. Therefore, if $\bar\NN_z(B)$ is a smooth
$\bar N_z$-structure, $B$ will be homosymmetrically relaxable if
and only if the \emph{reduced material groupoid} $\bar\NN_z(B)$ is
integrable (where $z\in\FB$ is fixed).
\end{thm}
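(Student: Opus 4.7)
\emph{Proof plan.} The plan is to derive both assertions of the theorem from two results established earlier in the paper: Proposition \ref{th.orthogonal.reduced.normaloid}, applied to the material groupoid $\GG(B)$, and Lemma \ref{th.intersection.groupoid}, applied to the pair $\NN(B)$ and $\OO(B)$. The theorem should then reduce to unpacking definitions.

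For the set identity $\bar\NN(B) = \NN(B)\cap\OO(B)$, I would first observe that, because $B$ is an elastic solid, Definition \ref{def.elastic.solid.classification} supplies a material metric under which every material symmetry is orthogonal; equivalently, $\GG(B)_{X,X} = \GG(X) \subseteq \OO_{X,X}(B)$ for every $X\in B$. This is exactly the hypothesis of Proposition \ref{th.orthogonal.reduced.normaloid} with $\Omega = \GG(B)$, and since by Definition \ref{def.material.normalizoid} the FGM material groupoid $\NN(B)$ coincides with the normalizoid $N(\GG(B))$, applying that proposition yields the identity at once. Neither relaxability nor homosymmetry is actually needed for this step; they enter only in the second conclusion.

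For the ``if and only if'' part, I would translate from groupoids to $G$-structures by means of Lemma \ref{th.intersection.groupoid}. Fixing a frame $z\in\FB$ and taking $\Omega = \NN(B)$, $\tilde\Omega = \OO(B)$, the lemma says that the $G$-structure associated to the intersection groupoid is $\NN_z(B)\cap\OO_z(B)$; combined with the first identity this yields $\bar\NN_z(B) = \NN_z(B)\cap\OO_z(B)$. By Definitions \ref{def.material.nstructure} and \ref{def.relaxable.solid}, homosymmetric relaxability is precisely the existence of an atlas whose induced cross-sections land simultaneously in $\NN_z(B)$ (making them unisymmetric) and in $\OO_z(B)$ (making them relaxed), which by the previous display is exactly an integrability atlas for the reduced $\bar N_z$-structure $\bar\NN_z(B)$ in the sense of Definition \ref{def.integrable.gstructure}.

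The main point to be careful with, such as it is, is not to confuse the material symmetry group $\GG(B)_{X,X}$ (which \emph{is} contained in $\OO_{X,X}(B)$ thanks to the solid structure) with the local isotropy $\NN(B)_{X,X}$ of the FGM groupoid, which is the normalizer of $\GG(X)$ and is typically strictly larger than $\GG(X)$ and not orthogonal. It is the former containment that feeds the hypothesis of Proposition \ref{th.orthogonal.reduced.normaloid}; once this distinction is respected, the remainder of the proof is routine bookkeeping in the correspondence between Lie subgroupoids of $\Pi(B)$ and their associated $G$-structures.
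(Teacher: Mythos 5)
Your proposal is correct and follows essentially the same route as the paper, which presents the theorem precisely as a direct consequence of Proposition \ref{th.orthogonal.reduced.normaloid} (applied to $\Omega=\GG(B)$, whose isotropy groups are orthogonal by the solid hypothesis) and Lemma \ref{th.intersection.groupoid} (applied to $\NN(B)$ and $\OO(B)$). Your explicit warning not to confuse $\GG(B)_{X,X}$ with $\NN(B)_{X,X}$ when checking the hypothesis of Proposition \ref{th.orthogonal.reduced.normaloid} is exactly the right point of care, and your observation that the set identity needs only the solid structure is consistent with the paper's argument.
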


Let $B$ a relaxable and homosymmetric elastic solid and let $g$ denote the compatible material metric
\begin{itemize}
\item If $B$ is \emph{fully isotropic}, which means the symmetry
group $\GG(X)$ of each point $X\in B$ is equal to the orthogonal
group $O(T_XB,g)$ itself,
then the reduced FGM material groupoid $\bar\NN(B)$ coincides with
the orthogonal grupoid $\OO(B)$. \item If $B$ is \emph{triclinic}
(the only element of the symmetry group is the identity map), the
FGM material groupoid $\NN(B)$ is the full frame groupoid
$\Pi(B)$, and thus $\bar\NN(B)=\OO(B)$ as before. \item If $B$ is
\emph{transversally isotropic}, at each point $X\in B$ there
exists a basis of $T_XB$ in which the material symmetries
$g\in\GG(X)$ may be represented by matrices of the form:
$$\prth{\begin{matrix}1 & 0 & 0\\ 0 & \cos\theta & -\sin\theta\\ 0 &
\sin\theta & \cos\theta\end{matrix}}$$ Thus, for this basis, the
normalizer of $\GG(X)$ is
$$ \NN(X) = \brqt{\prth{\begin{matrix}1 & 0 & 0\\ 0 & \cos\theta &
-\sin\theta\\ 0 & \sin\theta &
\cos\theta\end{matrix}},\prth{\begin{matrix}\alpha & 0 & 0\\ 0 &
\beta & 0\\ 0 & 0 & \beta\end{matrix}}} $$ where the brackets
denote the group generated by the elements enclosed, and where
$\theta,\alpha,\beta$ are real numbers,  $\alpha,\beta$ being in
addition positive. Therefore, the group at any base point of the
reduced FGM material groupoid coincides with the respective
symmetry group, that is
$$ \bar\NN(X) = \GG(X) \quad \forall x\in B. $$
This means that, even if the material groupoid $\GG(B)$ (the set
consisting of material isomorphisms and symmetries) is not transitive
(\ie\ $B$ is not uniform), the reduced FGM material groupoid
$\bar\NN(B)$ is, and it coincides with $\GG(B)$ on the symmetry
groups. Thus, there is some kind of uniformity that generalizes
the classical one. Finally, note that any $G$-structure related to
$\bar\NN(B)$ will have a transversely isotropic structural group
as mentioned before.
\par Finally, note that we recover an analogue result to Theorem
\ref{th.homogeneous.constitutive.equation.coordinates}, which is
also true for fully isotropic FGM solids. If $B$ is
homosymmetrically relaxable, then for a unisymmetric and relaxable
configuration $K$, the constitutive equation will be invariant
under the action of the structure group of the reduced
$N$-stucture, related to the configuration $K$. In this case, the
structure group will coincide through $K$ with the symmetry group
$\GG_K(X)$ at any point $X$ in the domain of $K$. However, the
constitutive equation will not be independent of the point.
\end{itemize}

\subsection{Functionally Graded Elastic Fluids}
\label{sec.functionally.graded.elastic.fluids}

In the same way we have generalized the definition of elastic
solids in section \secref{sec.functionally.graded.elastic.solids},
we are going to give a new definition of elastic fluids.
Classically, an elastic fluid is a uniform elastic material which
posses a unimodular material structure, that is a $U(3)$-structure
(see \cite{TrueNoll65} for instance), even though there are
smaller fluid structures as the ones of fluid crystals (\cf\
\cite{LeMar04}).

\begin{defn} \label{def.functionally.graded.elastic.fluids}
We will say that a functionally graded elastic material $B$ is a
\emph{functionally graded fluid} (or a \emph{functionally graded
fluid crystal}) if there is a volume form $\rho$ on $B$ invariant
under material symmetries such that every point is fluid (or,
respectivelly, if every point is a fluid crystal). The volume form
is called a \emph{material form}.
\end{defn}

As in the case of functionally graded elastic solids, the
following two conditions on cross-sections $(U,\sigma)$ of the
frame bundle $\FB$,
\begin{eqnarray}
&\sigma(X)^{-1}\cdot\GG_x\cdot\sigma(X)\subseteq U(3) \quad
\forall
X\in U \quad \forall(U,\sigma)&\\
&\sigma(X)^{-1}\cdot\tau(X)\in U(3) \quad \forall X\in U\cap V
\quad \forall(U,\sigma),(V,\tau)&
\end{eqnarray}
characterize the fluid material structure.

Given a functionally graded elastic fluid $B$, consider the
unimodular groupoid $\UU(B)$ related to the volume form $\rho$
(Example \ref{eg.unimodular.groupoid}). When two fluid points have
conjugate symmetry groups, only the unimodular part of the
conjugate transformation plays a role in the conjugation. That is,
if $P$ is the transformation that conjugates these two groups,
then the unimodular transformation $P/{\det}_\rho(P)$ still
realizes the conjugation.

\begin{prop} \label{th.deleon.epstein.fluids}
If $B$ is a unisymmetric elastic fluid, then
\begin{equation} \label{eq.deleon.epstein.fluids}
\NN^1(B) = \NN(B)\cap\UU(B),
\end{equation}
where $\NN^1(B)$ is the unimodular reduction of the FGM material
groupoid.
\end{prop}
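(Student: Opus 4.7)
The plan is to recognize this proposition as an instance of Proposition \ref{th.unimodular.reduced.groupoid}, applied to the ordinary material groupoid $\GG(B)$. The first step is to observe that the FGM material groupoid $\NN(B)$ of Definition \ref{def.material.normalizoid} is precisely the normalizoid of $\GG(B)$ inside the frame groupoid $\Pi(B)$ in the sense of Definition \ref{def.normalizoid}; accordingly, the identity $\NN^1(B) = \NN(B) \cap \UU(B)$ is formally the second assertion of Proposition \ref{th.unimodular.reduced.groupoid} with $\Omega = \GG(B)$.

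The hypothesis to verify is that $\GG(X) \subset \UU_{X,X}(B)$ for every $X \in B$. This is immediate from Definition \ref{def.functionally.graded.elastic.fluids}: the material form $\rho$ is by assumption invariant under every material symmetry $g \in \GG(X)$, which forces ${\det}_\rho(g) = \pm 1$, i.e.\ $g \in \UU_{X,X}(B)$.

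If one prefers a direct argument, the two inclusions separate cleanly. The inclusion $\NN(B) \cap \UU(B) \subset \NN^1(B)$ is immediate, since any element that is already unimodular coincides with its own image under the reduction map. For the reverse inclusion, given $A \in \NN^1(B)$, there exist $F \in \NN(B)$ and a nonzero scalar $\lambda$ such that $A = \lambda F$ and $|{\det}_\rho(A)| = 1$; because $\lambda$ commutes with every linear endomorphism, one has $A g A^{-1} = F g F^{-1}$ for all $g$, so $A$ normalizes $\GG(\alpha(F))$ and $\GG(\beta(F))$ exactly when $F$ does. Hence $A \in \NN(B) \cap \UU(B)$.

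The only point requiring real attention is the convention encoded in the notation $\Omega/{\det}_\rho$, but it plays no genuine role: the rescaling is by a pure scalar, which commutes with everything, so the normalizoid condition transfers automatically. This is precisely what makes the fluid case noticeably simpler than its solid counterpart (Theorem \ref{th.deleon.epstein}), where the symmetric factor of the polar decomposition need not commute with arbitrary group elements and one must invoke Lemma A.2 of \cite{EpsLe00}.
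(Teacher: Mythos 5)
Your proposal is correct and follows exactly the route the paper intends: the paper states this proposition without a written proof, relying on the remark that only the unimodular (scalar) part of a conjugating map matters together with the second half of Proposition \ref{th.unimodular.reduced.groupoid} applied to $\Omega=\GG(B)$, whose hypothesis $\GG(X)\subset\UU_{X,X}(B)$ is guaranteed by the invariance of the material form. Your direct two-inclusion argument via the scalar rescaling commuting with conjugation is precisely the content of that remark, and your observation about the imprecision of the notation $\Omega/{\det}_\rho$ is a fair (and harmless) caveat.
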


Let $B$ a fluid crystal of first kind (see
\cite{LeMar04,Wang67}), that is, an elastic fluid as in
\ref{def.functionally.graded.elastic.fluids} such that, for each
material point $X\in B$, the symmetry group $\GG(X)$ may be
represented for some reference $z$ at $X$ by matrices of the form
$$ A = \prth{\begin{matrix}a & b & 0\\ c & d & 0\\ e & f & g\end{matrix}} $$
with $\det(A)=\pm 1$. The normalizer in $\Gl(3)$ of this group of
matrices is the set of matrices of the same form but with the
restriction $\det(A)\neq0$. Therefore, when we intersect the
normalizer with $U(3)$ we obtain the original group of matrices.
This means that $\NN^1(X)=\GG(X)$ for every material point $x\in
B$.

The latter example shows us how a fluid material, which is not
necessarilly uniform, preserves uniformly the symmetry group
structure across the body.

\section*{Acknowledgements}
This work has been supported through a grant of the MEC,
Ministerio de Educaci\'on y Ciencia (Spain), project
MTM2007-62478. The third author aknowledges the MEC for an FPI
grant and the warm hospitality of the Department of Mechanical
Engineering, University of Calgary.


\end{document}